\numberwithin{equation}{section}
\theoremstyle{plain}
\newtheorem{theorem}{Theorem}[section]
\theoremstyle{plain}
\theoremstyle{plain}
\theoremstyle{plain}
\newtheorem{proposition}[theorem]{Proposition}
\theoremstyle{definition}
\newtheorem{definition}[theorem]{Definition}
\theoremstyle{remark}
\theoremstyle{remark}
\theoremstyle{definition}
\theoremstyle{plain}
\newtheorem{conjecture}[theorem]{Conjecture}
\theoremstyle{definition}
\providecommand{\norm}[1]{\left\lVert #1 \right\rVert}
\newcommand{\R}{\mathbb{R}}
\newcommand{\C}{\mathbb{C}}
\newcommand{\Rd}{\mathbb{R}^d}
\newcommand{\Z}{\mathbb{Z}}
\newcommand{\N}{\mathbb{N}}
\newcommand{\T}{\mathbb{T}}
\newcommand{\Lt}[1][d]{L^2(\R^{#1})}
\newcommand{\G}{\mathcal{G}}
\newcommand{\F}{\mathcal{F}}
\newcommand{\indicator}{\raisebox{2pt}{$\chi$}}
\renewcommand{\l}{\lambda}
\renewcommand{\L}{\Lambda}
\renewcommand{\H}{\mathbb{H}}
\newcommand{\vol}{\textnormal{vol}}
\newcommand{\sump}[1]{{\sum_{#1}}^\prime}
\newcommand{\detp}{{\det}^\prime}
\DeclareMathOperator*{\essinf}{ess\,inf}
\DeclareMathOperator*{\esssup}{ess\,sup}
\renewcommand{\S}
	{
		\left(
		\begin{array}{c c}
			a & b\\
			c & d
		\end{array}	
		\right)	
	}
\newcommand{\J}
	{
		\left(
		\begin{array}{r c}
			0 & 1\\
			-1 & 0
		\end{array}	
		\right)	
	}
\newcommandtwoopt{\xarrow}[2][0.5cm][0]{\mathrel{\rotatebox[origin=c]{#2}{$\xrightarrow{\rule{#1}{0pt}}$}}}
\begin{document}

\title[Some curious results related to a conjecture of Strohmer and Beaver]{
		Some curious results related to a conjecture of Strohmer and Beaver
	}

\author[Markus Faulhuber]{Markus Faulhuber}
\address{NuHAG, Fcaulty of Mathematics, University of Vienna\\Oskar-Morgenstern-Platz 1, 1090 Vienna, Austria}
\address{Department of Mathematics, RWTH Aachen University\\Schinkelstraße 2, 52062
Aachen, Germany
}
\email{markus.faulhuber@univie.ac.at}
\thanks{The author was supported in part by the Vienna Science and Technology Fund (WWTF): VRG12-009 and in part by an Erwin--Schrödinger Fellowship of the Austrian Science Fund (FWF): J4100-N32. The author spent the first stage of the fellowship with the Analysis Group at NTNU Trondheim where parts of this work have been established. The author wishes to thank Franz Luef for beneficial feedback on the first draft of the manuscript.}

\begin{abstract}
	We study results related to a conjecture formulated by Strohmer and Beaver about optimal Gaussian Gabor frame set-ups. Our attention will be restricted to the case of Gabor systems with standard Gaussian window and rectangular lattices of density 2. Although this case has been fully treated by Faulhuber and Steinerberger, the results in this work are new and quite curious. Indeed, the optimality of the square lattice for the Tolimieri and Orr bound already implies the optimality of the square lattice for the sharp lower frame bound. Our main tools include determinants of Laplace--Beltrami operators on tori as well as special functions from analytic number theory, in particular Eisenstein series, zeta functions, theta functions and Kronecker's limit formula. We note that our results also carry over to energy minimization problems over lattices and a heat distribution problem over flat tori.
\end{abstract}

\subjclass[2010]{primary: 42C15; secondary: 11M36, 33E05}
\keywords{Dedekind Eta Function, Gaussian Gabor Systems, Heat Kernel, Laplace--Beltrami Operator, Theta Functions, Zeta Functions}

\maketitle

\section{Introduction}\label{sec_Intro}
The conjecture of Thomas Strohmer and Scott Beaver \cite{StrBea03} relates to the problem of finding an optimal sampling strategy for Gaussian Gabor systems. They conjecture that a hexagonal pattern in the time-frequency plane should be used in order to minimize the condition number of the Gabor frame operator. This can also be formulated as an engineering problem about optimal orthogonal frequency division multiplexing (OFDM) systems and lattice-OFDM systems (see \cite{StrBea03}). We will use the Gabor frame formulation of the problem.

A Gabor system (see Section \ref{sec_TFA}) is a collection of functions obtained from combined translations $T_x$ and modulations $M_\omega$ of a so-called window function $g \in \Lt[]$. The translation parameters $x$ and modulation parameters $\omega$ are chosen with respect to an index set in $\R^2$. Throughout this work the index pairs $(x, \omega)$ will generate lattices in $\R^2$. A Gabor system is a frame for $\Lt[]$, if any element of the Hilbert space has a stable series expansion with respect to the Gabor system. These systems have wide applications and are frequently used in signal analysis and processing and also found their way into the popular area of machine learning. There, Gabor systems are for example used to pre-process audio data in order to extract specific features which can then be passed on to a convolutional neural network (see, e.g., \cite{BamDorHar19}, \cite{MarHolPerMaj19}, \cite{MarPerHolMaj19}).

Given a standard Gaussian window, we are looking for a lattice of given density (greater than 1) such that the condition number of the associated Gabor frame operator is minimal. For general lattices it is conjectured that the hexagonal lattice is optimal. If only separable (or rectangular) lattices are considered, the conjecture is that the square lattice provides the optimal solution. For even density, it was proven that the hexagonal lattice uniquely minimizes the upper frame bound \cite{Faulhuber_Hexagonal_2018} and in the separable case the square lattice minimizes the upper frame bound and maximizes the lower frame bound, implying that the condition number is minimal \cite{FaulhuberSteinerberger_Theta_2017}. 

From a qualitative point of view, the lower frame bound and the upper frame bound describe different aspects of the frame operator. Whereas the lower bound concerns the (bounded) invertibility of the frame operator, the upper bound gives the continuity of the operator since the frame operator is linear. Therefore, our main result can be considered to be quite curious.

\begin{theorem}[Main Result]\label{thm_main}
	Let $g_0(t) = 2^{1/4} e^{- \pi t^2}$ be the normalized standard Gaussian and $\L_{(\alpha, \beta)} = \alpha \Z \times \beta \Z$. We denote the optimal lower and upper frame bound of the Gabor system $\G(g_0, \L_{(\alpha,\beta)})$ by $A(\alpha,\beta)$ and $B(\alpha, \beta)$ respectively. By $\widetilde{B}(\alpha,\beta)$ we denote the (re-normalized) Tolimieri and Orr bound
	\begin{equation}
		\widetilde{B}(\alpha,\beta) = \sum_{\l^\circ \in \L_{(\alpha, \beta)}^\circ} |V_{g_0} g_0(\l^\circ)|, \qquad \L_{(\alpha, \beta)}^\circ = \tfrac{1}{\beta} \Z \times \tfrac{1}{\alpha} \Z.
	\end{equation}
	Then, for $(\alpha \beta)^{-1} = 2$, we have the following implications
	\begin{align}
		\widetilde{B} \left( \tfrac{t}{\sqrt{2}}, \tfrac{t}{\sqrt{2}}\right) \leq \widetilde{B} (t \alpha, t \beta),
		\quad \forall t \in \R_+ \quad
		& \Longrightarrow \quad
		A \left( \tfrac{1}{\sqrt{2}}, \tfrac{1}{\sqrt{2}}\right) \geq A (\alpha, \beta).
	\end{align}
	In each case, equality holds only if $\alpha = \beta = \tfrac{1}{\sqrt{2}}$.
\end{theorem}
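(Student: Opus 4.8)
The plan is to reduce both $\widetilde B$ and the sharp lower bound $A$ to one‑dimensional theta constants at density $2$, and then to expose an exact algebraic identity, mediated by the Dedekind eta function, that couples the lower frame bound to the Tolimieri--Orr bound. Throughout write $\theta_3(s) = \sum_{n\in\Z}e^{-\pi n^2 s}$, $\theta_4(s)=\sum_{n\in\Z}(-1)^n e^{-\pi n^2 s}$, $\theta_2(s)=\sum_{n\in\Z}e^{-\pi(n+1/2)^2 s}$ and let $\eta$ denote the Dedekind eta function on the imaginary axis. First I would make $\widetilde B$ explicit. Since $|V_{g_0}g_0(x,\omega)| = e^{-\frac{\pi}{2}(x^2+\omega^2)}$, the sum over the adjoint lattice $\L_{(\alpha,\beta)}^\circ = \tfrac1\beta\Z\times\tfrac1\alpha\Z$ factorizes into a product of two one‑dimensional theta sums. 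Putting $s=2\alpha^2$, the density constraint $\alpha\beta=\tfrac12$ gives $\tfrac{1}{2\beta^2}=s$ and $\tfrac{1}{2\alpha^2}=\tfrac1s$, so
\[
	\widetilde B(\alpha,\beta) = \theta_3(s)\,\theta_3(1/s),
\]
and a dilation by $t$ turns this into $\theta_3(t^2 s)\,\theta_3(t^2/s)$. The square lattice is the self‑dual point $s=1$.

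Next I would compute $A$ at density $2$ via the Janssen representation $S = \tfrac{1}{\alpha\beta}\sum_{\l^\circ}\langle g_0,\pi(\l^\circ)g_0\rangle\,\pi(\l^\circ)$. The decisive simplification is that the redundancy is an \emph{even} integer: the phase $e^{\pi i mn/(\alpha\beta)}=e^{2\pi i mn}$ is trivial, so all Janssen coefficients are real and positive and the adjoint‑lattice shifts commute. Hence $S$ is a function of the two commuting unitaries generating $\L^\circ$ and is unitarily equivalent, through the Zak transform, to multiplication by the symbol $2\,\Theta(\phi)\Theta'(\psi)$ on the joint spectrum $\T^2$, where $\Theta(\phi)=\sum_m e^{-\pi m^2 s}e^{2\pi i m\phi}$ and $\Theta'$ is the analogous sum with $s$ replaced by $1/s$. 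Each factor is a multiple of the heat kernel of the circle, hence positive with maximum at $\phi=0$ and minimum at the antipode $\phi=\tfrac12$; the extrema of the product are therefore the products of the individual extrema, which yields
\[
	B(\alpha,\beta) = 2\,\theta_3(s)\theta_3(1/s) = 2\,\widetilde B(\alpha,\beta),
	\qquad
	A(\alpha,\beta) = 2\,\theta_4(s)\theta_4(1/s).
\]
Thus the Tolimieri--Orr bound is exactly $B/2$, while the sharp lower bound is its \emph{alternating} companion.

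Third comes the bridge, which I expect to be the heart of the matter. Using the Jacobi transformations $\theta_3(1/s)=\sqrt s\,\theta_3(s)$, $\theta_4(1/s)=\sqrt s\,\theta_2(s)$ together with the product identity $\theta_2\theta_3\theta_4 = 2\eta^3$, a short computation collapses the two displayed products into the single identity
\[
	A(\alpha,\beta) = \frac{4\,h(s)^3}{\sqrt{\widetilde B(\alpha,\beta)}},
	\qquad h(s) := s^{1/4}\eta(s),
\]
where $h(1/s)=h(s)$ is precisely the inversion‑invariant combination that, by Kronecker's limit formula, governs the zeta‑regularized determinant of the Laplace--Beltrami operator on the rectangular flat torus of shape $s$.

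Finally I would read off the conclusion. Maximizing $A$ over the shape at fixed density $2$ amounts, by the identity above, to \emph{simultaneously} maximizing $h$ and minimizing $\widetilde B$. The hypothesis evaluated at $t=1$ is exactly $\widetilde B(\alpha,\beta)\ge\widetilde B(\tfrac1{\sqrt2},\tfrac1{\sqrt2})$, while the maximality of the determinant of the Laplacian among rectangular tori of fixed area, equivalently $h(s)\le h(1)$ with equality only at $s=1$, is the classical consequence of Kronecker's limit formula; both inequalities point the same way, so
\[
	\frac{A(\alpha,\beta)}{A(\tfrac1{\sqrt2},\tfrac1{\sqrt2})}
	= \left(\frac{h(s)}{h(1)}\right)^{3}\sqrt{\frac{\widetilde B(\tfrac1{\sqrt2},\tfrac1{\sqrt2})}{\widetilde B(\alpha,\beta)}} \le 1,
\]
which is the asserted implication; the strictness of the eta inequality forces $s=1$, i.e. $\alpha=\beta=\tfrac1{\sqrt2}$, in the equality case (so that only the density‑$2$ instance of the hypothesis is actually needed, the dilation freedom being absorbed into the eta identity). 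The main obstacle is the second step: rigorously pinning down that the essential infimum of the frame‑operator symbol is attained at the half‑period point — so that $A$ genuinely equals the $\theta_4$‑product rather than merely being bounded by it — and then recognizing the eta‑function identity that ties a \emph{lower} frame bound to a determinant of a Laplacian.
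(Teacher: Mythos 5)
Your reductions in the first three steps are sound and in fact coincide with the paper's: the factorization $\widetilde B = \theta_3(s)\theta_3(1/s)$, the Janssen/Zak identification $A = 2\theta_4(s)\theta_4(1/s)$, $B = 2\theta_3(s)\theta_3(1/s) = 2\widetilde B$ (the positivity of each periodized-Gaussian factor, which you flag as the main obstacle, is standard and is exactly Janssen's computation, which the paper simply cites), and your identity
\begin{equation}
	A(\alpha,\beta) = \frac{4\,h(s)^3}{\sqrt{\widetilde B(\alpha,\beta)}}, \qquad h(s) = s^{1/4}\eta(i s),
\end{equation}
is correct and is algebraically equivalent to the paper's relation $2^{10}\left(\detp\Delta_y\right)^3 = A^4 B^2$, since $\detp\Delta_s = h(s)^4$.

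The genuine gap is in your last step: the inequality $h(s)\leq h(1)$ with equality only at $s=1$ (equivalently, maximality of $\detp\Delta$ at the square torus among rectangular tori of fixed area) is \emph{not} a consequence of Kronecker's limit formula. Kronecker's limit formula is an identity; it only converts $\tfrac{d}{ds}Z_{(\alpha,\beta)}\big|_{s=0}$ into an expression in $\eta$, and says nothing about where the maximum sits. That extremal statement is a nontrivial theorem, and deriving it \emph{from the hypothesis} is precisely the content of the paper's proof: the hypothesis for all $t$ gives $W_y(t)\geq W_1(t)$ for all $t$, which is fed into the Mellin-transform representation of the zeta function, its regularized form valid near $s=0$, and the functional equation, to conclude $Z_y'(0)\geq Z_1'(0)$ and hence $\detp\Delta_y \leq \detp\Delta_1$. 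This is also exactly why the hypothesis is quantified over all $t\in\R_+$; your parenthetical claim that only the $t=1$ instance is needed is a symptom of having assumed the hard part. To close the gap you must either reproduce that zeta-function argument (so that the eta inequality is derived from the hypothesis, as in the paper), or invoke the correct external reference for it — the purely analytic proof for rectangular tori in Faulhuber's \emph{Extremal Determinants of Laplace--Beltrami Operators for Rectangular Tori} (Math.\ Z., 2020), or Montgomery's theta result combined with the Mellin/functional-equation argument of Baernstein--Vinson — rather than attributing it to Kronecker's limit formula, which cannot carry that weight.
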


We note that, for $(\alpha \beta)^{-1} = 2$, the results
\begin{equation}
	\widetilde{B} \left( \tfrac{t}{\sqrt{2}}, \tfrac{t}{\sqrt{2}}\right) \leq \widetilde{B} (t \alpha, t \beta),	\quad \forall t \in \R_+
	\qquad \textnormal{ and } \qquad
	A \left( \tfrac{1}{\sqrt{2}}, \tfrac{1}{\sqrt{2}}\right) \geq A (\alpha, \beta),
\end{equation}
have been established already in \cite{FaulhuberSteinerberger_Theta_2017} and the result on $\widetilde{B}$ is actually covered by results of Montgomery\cite{Montgomery_Theta_1988}. The new insight of the above theorem is the implication
\begin{equation}
	\widetilde{B} \left( \tfrac{t}{\sqrt{2}}, \tfrac{t}{\sqrt{2}}\right) \leq \widetilde{B} (t \alpha, t \beta),	\quad \forall t \in \R_+
	\qquad \Longrightarrow \qquad
	A \left( \tfrac{1}{\sqrt{2}}, \tfrac{1}{\sqrt{2}}\right) \geq A (\alpha, \beta).
\end{equation}

To obtain the implication in Theorem \ref{thm_main}, we need to exploit properties of Laplace-Beltrami operators on tori, their determinants and connections to zeta functions as well as other tools from analytic number theory. The use of this machinery is definitely non-standard in the theory of Gabor systems. In addition, the proof relies on the separability of the lattice and the author was not able to establish similar implications for non-separable lattices, even though it is known, due to the results in \cite{Faulhuber_Hexagonal_2018} and \cite{Montgomery_Theta_1988}, that the hexagonal lattice always gives the smallest Tolimieri and Orr bound for fixed lattice density.

Even though this work deals with a problem in time-frequency analysis, the results are of a more general nature. We note that, for example, Montgomery's main result in \cite{Montgomery_Theta_1988} shows the so-called universal optimality of the hexagonal lattice for the problem of lattice energy minimization as most recently described in \cite{Coh-Via19}. Now, minimizing $\widetilde{B}$ among lattices of fixed density is equivalent to the problem of determining the universal optimality of the hexagonal lattice and the problem considered by Montgomery \cite{Montgomery_Theta_1988}, as shown in \cite{Faulhuber_Hexagonal_2018}. The problem for general point sets in $\R^2$ is still open and its solution is of utmost importance. We refer to \cite{Coh-Via19} for further reading.

On the other hand, the problem of determining the largest possible lower frame bound is intimately connected to a heat distribution problem on flat tori as described by Baernstein \cite{Baernstein_HeatKernel_1997}, who also conjectures a connection (see also \cite{Baernstein_ExtremalProblems_1994} and \cite{BaernsteinVinson_Local_1998}) to a long-standing problem of Landau \cite{Lan29}. A new connection between the heat kernel problem and Landau's problem was recently established in \cite{Faulhuber_Rama_2019}. We will draw a connection to the heat kernel problem in Section \ref{sec_discuss}. For further reading on the connections between Gabor systems and Landau's problem we also refer to \cite{Faulhuber_SampTA19}.

This work is structured as follows.
\begin{itemize}
	\item In Section \ref{sec_TFA} we introduce Gabor systems and frames for the Hilbert space $\Lt[]$. The concepts can easily be generalized to higher dimensions, but in this work we only consider the 1-dimensional case. At the end of the section we state 2 conjectures resulting from a conjecture of Strohmer and Beaver \cite{StrBea03} and the results of Faulhuber \cite{Faulhuber_Hexagonal_2018} as well as Faulhuber and Steinerberger \cite{FaulhuberSteinerberger_Theta_2017}.
	\item In Section \ref{sec_Janssen} we introduce some methods, going back to the work of Janssen \cite{Jan95}, \cite{Jan96}, which allow us to compute sharp frame bounds.
	\item In Section \ref{sec_NT} we leave the field of time-frequency analysis and introduce some tools from analytic number theory. These tools include theta functions, zeta functions related to the Laplace--Beltrami operator on a torus and the Dedekind eta function. Equipped with these tools, we will prove our main result.
	\item In Section \ref{sec_discuss} we discuss some aspects of the proof and draw connections to the heat distribution problem on flat tori.
	\item In Appendix \ref{sec_symplectic}, we briefly explain what the symplectic Fourier transform and the symplectic Poisson summation formula are. We will use them as they fit nicely with the concepts of time-frequency analysis.
\end{itemize}

\section{Time-Frequency Analysis}\label{sec_TFA}
\subsection{Gabor Systems}

In this section we will set the notation and briefly recall the notion of a Gabor system. We are interested in Gabor systems for square integrable functions on the line. Hence, the Hilbert space of interest is $\Lt[]$. We note that the concepts can easily be generalized to higher dimensions or, more general, to locally compact Abelian groups (see e.g.~\cite{JakLem_Density_2016}). For an introduction or more information on Gabor systems and frames we refer to some of the standard literature e.g.~\cite{Christensen_2016}, \cite{FeiStr98}, \cite{FeiStr03}, \cite{Gro01}, \cite{Hei06} and for a more current state of the research and important open problems in Gabor analysis we refer e.g.~to the expository articles \cite{Fei_ATFA17} and \cite{Gro14}.

For the inner product in $\Lt[]$ we write
\begin{equation}
	\langle f, g\rangle = \int_\R f(t) \, \overline{g(t)} \, dt,
\end{equation}
where $\overline{g}$ denotes the complex conjugate of $g$. For the Fourier transform we write
\begin{equation}
	\F f(\omega) = \int_\R f(t) e^{-2 \pi i \omega t} \, dt.
\end{equation}
Next, we introduce the fundamental tool in time-frequency analysis, the short-time Fourier transform.
\begin{definition}[STFT]
	For a function $f \in \Lt[]$ and a window $g \in \Lt[]$, the short-time Fourier transform of $f$ with respect to $g$ is defined as
	\begin{equation}
		V_g f(x, \omega) = \int_\R f(t) \overline{g(t-x)} e^{-2 \pi i \omega t} \, dt , \qquad x, \omega \in \R.
	\end{equation}
\end{definition}

A Gabor system for $\Lt[]$ is generated by a (fixed, non-zero) window function $g \in \Lt[]$ and an index set $\L \subset \R^2$ and is denoted by $\G(g,\L)$. These systems were studied already by von Neumann in the context of quantum mechanics \cite{Neumann_Quantenmechanik_1932} and became popular in engineering due to the article by Gabor \cite{Gab46}. A Gabor system consists of time-frequency shifted versions of $g$ and the elements $\pi(\l) g$, which will introduced now, are called atoms. By $\l = (x, \omega) \in \R^2$ we denote the generic point in the time-frequency plane\footnote{We note that $\R$ is a locally Abelian group and that its dual group is $\widehat{\R}$, the group of characters. The time-frequency plane $\R^2$ is actually isomorphic to $\R \times \widehat{\R}$.} and for a time-frequency shift by $\l$ we write
\begin{equation}
	\pi (\l)g(t) = M_\omega T_x \, g(t) = e^{2 \pi i \omega t} g(t-x), \quad x,\omega,t \in \R.
\end{equation}
In general, time-frequency shifts do not commute, as already the time-shift and frequency-shift operators do not commute;
\begin{equation}
	M_\omega T_x = e^{2 \pi i \omega x} T_x M_\omega.
\end{equation}
This results in the following commutation relations for time-frequency shifts;
\begin{equation}\label{eq_comm_rel}
	\pi(\l)\pi(\l') = e^{2 \pi i (x \omega' - x' \omega)} \pi(\l')\pi(\l).
\end{equation}
These commutation relations are the very reason for the importance of the symplectic group in time-frequency analysis and why we will use the symplectic Fourier transform and a symplectic version of the Poisson summation formula (see Appendix \ref{sec_symplectic}).

For a window function $g$ and an index set $\L$ the Gabor system is
\begin{equation}
	\G(g,\L) = \lbrace \pi(\l) g \, | \, \l \in \L \rbrace.
\end{equation}
We note that we have $V_g f(\l) = \langle f, \pi(\l) g \rangle$. Hence, sampling $V_gf$ on $\L$ is the same as taking linear measurement with respect to the atoms $\pi(\l)g$ of the Gabor system $\G(g,\L)$.

It is of particular interest to know whether a Gabor system is also a (Gabor) frame. In this case, the Gabor system $\G(g,\L)$ serves as a generalization of an orthnormal basis.
\begin{definition}[Gabor frame]
	In order to be a frame, $\G(g,\L)$ has to satisfy the frame inequality
	\begin{equation}\label{eq_frame}
		A \norm{f}_2^2 \leq \sum_{\l \in \L} \left| \langle f, \pi(\l) g \rangle \right|^2 \leq B \norm{f}_2^2, \quad \forall f \in \Lt[],
	\end{equation}
	for some positive constants $0 < A \leq B < \infty$ called frame constants or frame bounds.
\end{definition}
In this work, whenever we use the term frame bounds, we usually refer to the tightest possible (optimal) bounds in \eqref{eq_frame}. It is clear from the frame inequality \eqref{eq_frame} that the frame bounds crucially depend on the window $g$ as well as the index set $\L$ and in general it is hard to calculate sharp frame bounds. However, for certain windows and certain index sets we have explicit formulas, as we will see later.

A prototype example of a Gabor system, which also is a Gabor frame and actually constitutes an orthonormal basis for $\Lt$, is given by
\begin{equation}
	\G(\indicator_{[-\frac{1}{2}, \frac{1}{2}]}, \Z^2) = \{ e^{2 \pi i l t} \indicator_{[-\frac{1}{2}, \frac{1}{2}]}(t-k) \mid (k,l) \in \Z^2\}.
\end{equation}
Note that this example uses the well-known Fourier basis for $L^2([-\frac{1}{2}, \frac{1}{2}])$ (or $L^2$ of any other interval of length 1) and places (non-overlapping) copies of it on the real line. The frame inequality \eqref{eq_frame} actually becomes an equality with bounds $A=B=1$. Moreover, we can expand any $f \in \Lt[]$ with respect to the Gabor system as
\begin{align}
	f(t) & = \sum_{(k,l) \in \Z^2} \langle f, \pi(k,l) \indicator_{[-\frac{1}{2}, \frac{1}{2}]} \rangle \, \pi(k,l) \indicator_{[-\frac{1}{2}, \frac{1}{2}]}(t)\\
	& = \sum_{(k,l) \in \Z^2} c_{k,l} \, e^{2 \pi i l t} \, \indicator_{[-\frac{1}{2}, \frac{1}{2}]}(t-k)
\end{align}
The coefficients $c_{k,l}$ are obtained from the STFT, just as the coefficients in a Fourier series are obtained from the respective Fourier transform of the function on an interval. For general a window $g$ and index set $\L$, one is interested in expansions of the form
\begin{equation}
	f = \sum_{\l \in \L} c_\l \, \pi(\l) g,
\end{equation}
for all $f$ in $\Lt[]$ such that $(c_\l) \in \ell^2(\L)$, which is only possible if the frame inequality is fulfilled \cite{LyuSei99}.

Loosely speaking, the coefficients contain local information of the time-frequency content of a function $f$, in contrast to the Fourier transform $\F f(\omega)$, where we only know the total contribution of the frequency $\omega$ to all of $f$, but not to $f$ localized around some point $x$. In order to obtain valuable local information obtained from the coefficients $c_\l$, one needs a window $g$ which localizes $f$ well and where the Fourier transform $\F g$ localizes $\F f$ sufficiently well too. The localization properties of the indicator function $\indicator_{[-\frac{1}{2}, \frac{1}{2}]}(t)$ are therefore not satisfying, as its Fourier transform is the sinc function $\frac{\sin(\pi \omega)}{\pi \omega}$, which is slowly decaying. Now, under very mild assumptions on the window $g$, such as being continuous, and piece-wise differentiable and decaying faster than $\frac{1}{x}$, it is not possible to obtain an orthonormal basis from a Gabor system. This is known as the Balian-low theorem. Therefore, one needs to deal with overcomplete systems, which also means that, in general, the elements $\pi(\l)g$ of the Gabor system $\G(g,\L)$ are neither linearly independent nor orthogonal any more.

To the Gabor system $\G(g,\L)$ we can associate the Gabor frame operator $S_{g,\L}$ which acts on an element $f \in \Lt[]$ by the rule
\begin{equation}
	S_{g,\L} f = \sum_{\l \in \L} \langle f, \pi(\l) g \rangle \, \pi(\l) g.
\end{equation}
In general, $S_{g,\L} f$ and $f$ are different elements in $\Lt[]$. However, if $\G(g,\L)$ is an orthonormal basis, as in the example above, then $S_{g,\L}$ is the identity operator on $\Lt[]$. If $\G(g,\L)$ is a Gabor frame, then the frame operator is bounded and boundedly invertible and we can expand any $f \in \Lt[]$, e.g., as
\begin{equation}
	f = S_{g,\L}^{-1} S_{g,\L}f = \sum_{\l \in \L} \underbrace{\langle f, S_{g,\L}^{-1} \pi(\l) g\rangle}_{c_\l} \, \pi(\l) g.
\end{equation}
Interchanging the action of the inverse from operator and summation is justified as, under the assumption that $\G(g,\L)$ is a frame, the series expansion of $f$ obtained by the action of the frame operator is a Bessel series and converges unconditionally \cite[Chap.~5.3]{Gro01}.

The (optimal) frame bounds are connected to the frame operator in the following way
\begin{align}
	A^{-1} = \norm{S_{g,\L}^{-1}}_{op} \qquad \text{ and } \qquad B = \norm{S_{g,\L}}_{op}.
\end{align}
The condition number of the frame operator is
\begin{equation}
	cond(S_{g,\L}) = B/A.
\end{equation}
Hence, the frame bounds serve as a quantitative measure of how close or far the frame operator is from (a multiple of) the identity operator. Also, it is of interest for implementations to have a small condition number of the frame operator when transmitting data using an (N-)OFDM system implemented by a Gabor frame\footnote{As the elements of a Gabor frame are in general not orthogonal, it is appropriate to speak about non-orthogonal frequency division multiplexing (N-OFDM).}.

As mentioned in the introduction, qualitatively the lower bound shows that the frame operator is boundedly invertible and the upper bound shows that the frame operator is continuous on $\Lt[]$. Therefore, one may not expect that an optimality condition on one bound implies the optimality of the other bound.

\subsection{Lattices}
Throughout the rest of this work the index set $\L \subset \R^2$ will be a lattice. A lattice $\L$ is generated by an invertible (non-unique) $2 \times 2$ matrix $M$, in the sense that
\begin{equation}
	\L = M \Z^2 = \lbrace k v_1 + l v_2 \mid k,l \in \Z \rbrace,
\end{equation}
where $v_1$ and $v_2$ are the columns of $M$. The parallelogram spanned by the vectors $v_1$ and $v_2$ is called the fundamental domain of $\L$. We note that the matrix $M$ defining a lattice is not unique as we may choose from a countable set of bases for $\Z^2$. More precisely, any matrix $\mathcal{B} \in SL(2,\Z)$, i.e., any matrix of determinant 1 with integer entries leaves $\Z^2$ invariant;
\begin{equation}
	\mathcal{B} \Z^2 = \Z^2.
\end{equation}
Hence, $\L = M \Z^2 = (M \mathcal{B}) \Z^2$ for any $\mathcal{B} \in SL(2,\Z)$. We also refer to \cite[Chap.~VII]{Serre_Course_1973} for the importance of $SL(2,\Z)$ in the theory of modular forms, which will be discussed briefly in Section \ref{sec_modular}. 

The volume and the density of the lattice are given by
\begin{equation}
	\text{vol}(\L) = |\det(M)| \qquad \text{ and } \qquad \delta(\L) = \frac{1}{\text{vol}(\L)},
\end{equation}
respectively. For a Gabor system $\G(g,\L)$, the density $\delta(\L)$ determines the redundancy of the system. If $\delta < 1$, the system fails to be a frame as not enough information is contained in the coefficient sequence $(c_\l)$. The case $\delta = 1$ is referred to as the critical density and in this case orthonormal bases and frames exist. If $\delta > 1$, frames exist, but they cannot be an orthonormal basis. However, the condition $\delta \geq 1$ is only a necessary condition and not always sufficient for a Gabor system to constitute a frame. We refer to \cite[Chap.~7.5]{Gro01} or \cite{Hei07} for more information on density conditions for Gabor frames.

The adjoint lattice $\L^\circ$ is defined as
\begin{equation}
	\L^\circ = J M^{-T} \Z^2, \qquad J = \J
\end{equation}
and $M^{-T}$ denotes the transposed inverse of $M$. Hence, the adjoint lattice is just a 90 degrees rotated version of the dual lattice, usually denoted by $\L^\bot$. The formal, but equivalent, definition of the adjoint lattice is
\begin{equation}
	\L^\circ = \{ \l^\circ \in \R^2 \mid \pi(\l)\pi(\l^\circ) = \pi(\l^\circ) \pi(\l), \, \forall \l \in \L \}.
\end{equation}
Note that the adjoint lattice is characterized by the commutation relations \eqref{eq_comm_rel} and equivalently we have
\begin{equation}
	e^{2 \pi i \sigma(\l^\circ, \l)} = 1 \Longleftrightarrow \sigma(\l^\circ, \l) \in \Z,
\end{equation}
where $\sigma(.,.)$ is the standard symplectic form (see Appendix \ref{sec_symplectic}). This underlines the analogy of the adjoint lattice $\L^\circ$ to the dual lattice $\L^\perp$, which is characterized by the fact that
\begin{equation}
	e^{2 \pi i \l^\perp \cdot \l} = 1 \Longleftrightarrow \l^\perp \cdot \l \in \Z,
\end{equation}
where $\l^\perp \cdot \l$ is the Euclidean inner product between $\l^\perp$ and $\l$.

In the sequel, we will often limit our attention to rectangular or separable lattices. A lattice is called separable if the generating matrix can be represented by a diagonal matrix, which means that the lattice and its adjoint are given by
\begin{equation}
	\L_{(\alpha, \beta)} = \alpha \Z \times \beta \Z \qquad \textnormal{ and } \qquad
	\L_{(\alpha, \beta)}^\circ = \tfrac{1}{\beta} \Z \times \tfrac{1}{\alpha} \Z.
\end{equation}
Note that the dual lattice is given by $\L_{(\alpha, \beta)}^\perp = \frac{1}{\alpha} \Z \times \frac{1}{\beta} \Z$.

\subsection{Conjectures Related to the Work of Strohmer and Beaver}
Throughout this work we call the function $g_0(t) = 2^{1/4} e^{-\pi t^2}$ the standard Gaussian. In their article \cite{StrBea03} from 2003, Strohmer and Beaver conjecture that for any fixed density the hexagonal lattice minimizes the condition number $B/A$ of the Gabor frame operator $S_{g_0,\L}$. The following conjecture implies the conjecture of Strohmer and Beaver.
\begin{conjecture}\label{con_generalized_Strohmer_Beaver}
	For the standard Gaussian window and any fixed lattice density $\delta > 1$, among all lattices in the set
	\begin{equation}
		\mathfrak{F}_{\L}^\delta (g_0) = \lbrace \Lambda \subset \R^2 \, \textnormal{a lattice} \, | \, \textnormal{vol}(\Lambda)^{-1} = \delta \rbrace,
	\end{equation}
	the hexagonal lattice is the unique maximizer for the lower frame bound and the unique minimizer for the upper frame bound.
\end{conjecture}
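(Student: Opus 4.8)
The plan is to separate the two optimality claims, since the upper frame bound admits a clean theta-function description whereas the lower frame bound does not. For both bounds the natural starting point is Janssen's representation of the Gabor frame operator \cite{Jan95, Jan96}: after passing to the adjoint lattice $\L^\circ$, the operator $S_{g_0,\L}$ is assembled from the numbers $V_{g_0}g_0(\l^\circ)$, and for the standard Gaussian these are explicit, $|V_{g_0}g_0(x,\omega)| = e^{-\tfrac{\pi}{2}(x^2+\omega^2)}$. Fixing the density $\delta$ fixes $\vol(\L^\circ) = \delta$, so every quantity below becomes a sum of Gaussians sampled on a lattice of prescribed covolume, and the free parameter is the \emph{shape} of $\L^\circ$, encoded by a point $\tau \in \H$ modulo $SL(2,\Z)$.

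For the upper bound I would first control $B$ through the Tolimieri--Orr bound $\widetilde{B}$, which for $g_0$ is exactly the Gaussian lattice sum
\begin{equation}
	\widetilde{B}(\L) = \sum_{\l^\circ \in \L^\circ} e^{-\tfrac{\pi}{2}|\l^\circ|^2}.
\end{equation}
The universal optimality of the hexagonal lattice for Gaussian sums \cite{Montgomery_Theta_1988} shows that, among lattices $\L^\circ$ of fixed covolume $\delta$, the hexagonal one uniquely minimizes $\widetilde{B}$; since the hexagonal lattice coincides, up to rotation and scaling, with its adjoint, the corresponding $\L$ is again hexagonal. To upgrade this $\widetilde{B}$-statement to the genuine optimal bound $B$ one exploits the extra structure available at integer oversampling: the results of \cite{Faulhuber_Hexagonal_2018} already establish the hexagonal minimizer of $B$ for every even density, so the remaining task for the upper bound is to remove the parity restriction.

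The lower bound is where I expect the real difficulty, because there is no representation of $A$ as a single Gaussian lattice sum and Montgomery's theorem does not apply directly. My strategy would be to import the analytic number theory announced in the paper: express the relevant spectral data of $S_{g_0,\L}$ via the spectral zeta function and the determinant of the Laplace--Beltrami operator on the flat torus $\R^2/\L^\circ$, and then use Kronecker's limit formula together with the Dedekind eta function to make the dependence of $A$ on the lattice shape explicit. This is precisely the route by which the separable, density-$2$ case is settled in Theorem \ref{thm_main}, where optimality of $\widetilde{B}$ is shown to force optimality of $A$, and it is the same mechanism that ties $A$ to the heat-kernel extremal problem of Baernstein \cite{Baernstein_HeatKernel_1997}.

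The main obstacle is exactly the step the paper flags as unresolved. The implication from the $\widetilde{B}$-optimum to the $A$-optimum in Theorem \ref{thm_main} relies on separability of $\L$ and on the fact that, once $\delta = 2$ is fixed, the admissible shapes form a \emph{one-parameter} family, so the Kronecker-limit-formula expansion collapses to a single monotone function of $t$. For a general lattice the modular parameter $\tau$ varies in two real dimensions, this reduction fails, and self-duality of the hexagonal point alone no longer closes the comparison. Overcoming this would require a genuinely two-dimensional convexity or monotonicity statement for the eta/theta quotient governing $A$ over the fundamental domain of $SL(2,\Z)$ --- a statement not presently available, even though the analogous fact for $\widetilde{B}$ follows from \cite{Montgomery_Theta_1988}. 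This is why, despite settling the restricted cases, the full conjecture must for now remain open.
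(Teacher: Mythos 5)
What you were asked to prove is Conjecture \ref{con_generalized_Strohmer_Beaver} of the paper, which is an open problem: the paper contains no proof of it, only the partial results you cite, namely Montgomery's theorem giving the hexagonal minimizer of $\widetilde{B}$ \cite{Montgomery_Theta_1988}, the hexagonal minimality of $B$ at even densities \cite{Faulhuber_Hexagonal_2018}, the separable results of \cite{FaulhuberSteinerberger_Theta_2017}, and the density-$2$ implication of Theorem \ref{thm_main}. Your proposal correctly declines to claim a proof, and your diagnosis of the obstruction --- that the separable case reduces the lattice shape to the one-parameter family $\tau = iy$, where the factorization of $A$ and $B$ into theta nulls and the identity $A^4B^2 = 2^{10}\,(\detp \Delta_y)^3$ let Kronecker's limit formula close the argument, whereas for general $\tau \in \H$ no analogous factorization of $A$ is even known (see Section \ref{sec_modular}) --- agrees precisely with the paper's own discussion in Sections \ref{sec_Intro} and \ref{sec_discuss} of why the full conjecture remains open.
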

Before the work of Strohmer and Beaver a conjecture by le Floch, Alard and Berrou (1995) \cite{FloAlaBer95} suggested that the square lattice could be optimal. This was disproved in \cite{StrBea03}, however, if we only consider the separable case we get the following conjecture.
\begin{conjecture}\label{con_generalized_Floch}
	For the standard Gaussian window and any fixed lattice density $\delta > 1$, among all separable lattices in the set
	\begin{equation}
		\mathfrak{F}_{(\alpha,\beta)}^\delta (g_0) = \lbrace \alpha \Z \times \beta \Z \mid \alpha, \beta \in \R_+, \, (\alpha \beta)^{-1} = \delta \rbrace,
	\end{equation}
	the square lattice is the unique maximizer for the lower frame bound and the unique minimizer for the upper frame bound.
\end{conjecture}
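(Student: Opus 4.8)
The plan is to make everything explicit for the Gaussian and then read off the optimal lattice from extremal properties of theta functions. The starting point is that the STFT of $g_0$ with itself is a modulated Gaussian, $V_{g_0}g_0(x,\omega) = e^{-\pi i \omega x}\, e^{-\frac{\pi}{2}(x^2+\omega^2)}$, so that $|V_{g_0}g_0(x,\omega)| = e^{-\frac{\pi}{2}(x^2+\omega^2)}$. Sampling this on the separable adjoint lattice $\L_{(\alpha,\beta)}^\circ = \tfrac1\beta\Z\times\tfrac1\alpha\Z$ makes the Tolimieri and Orr bound split into a product of two one-dimensional theta series,
\begin{equation}
	\widetilde B(\alpha,\beta) = \Bigg(\sum_{k\in\Z} e^{-\frac{\pi}{2}k^2/\beta^2}\Bigg)\Bigg(\sum_{l\in\Z} e^{-\frac{\pi}{2}l^2/\alpha^2}\Bigg),
\end{equation}
whose two arguments are coupled only through the density constraint $\alpha\beta = 1/\delta$.

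For the upper frame bound I would pass through the Janssen representation of the frame operator $S_{g_0,\L_{(\alpha,\beta)}}$ as an absolutely convergent series of time-frequency shifts indexed by $\L_{(\alpha,\beta)}^\circ$. For the Gaussian on a rectangular lattice this representation is diagonalized by the Zak transform and yields an explicit theta-type formula for the sharp bound $B$ that inherits the same separable product structure as $\widetilde B$. Minimizing such a product of theta values under the constraint $\alpha\beta = 1/\delta$, the extremal (monotonicity and convexity) properties of the one-dimensional theta function---exactly those established by Montgomery and exploited by Faulhuber and Steinerberger---force a unique minimum at the symmetric point $\alpha=\beta=1/\sqrt{\delta}$, i.e.\ the square lattice. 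The identical argument applied to the product displayed above re-proves the first inequality in Theorem \ref{thm_main}.

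The crux is the lower frame bound $A$, which does not factor and is therefore inaccessible to the product argument. Here I would again begin from the Janssen representation but analyze the full spectrum of $S_{g_0,\L_{(\alpha,\beta)}}$ rather than a single dominant term. The strategy is to recast $A$ in terms of theta/zeta quantities attached to the flat torus $\R^2/\L_{(\alpha,\beta)}$, identify the relevant quantity with a regularized determinant of the Laplace--Beltrami operator, and then invoke Kronecker's limit formula to express it through the Dedekind eta function; the extremality of the square lattice for $A$ then follows from the known extremal property of $\eta$ (equivalently, of the spectral zeta function). At density $\delta=2$ this reduction is exact because the adjoint lattice is then a pure dilation of the lattice itself, $\L_{(\alpha,\beta)}^\circ = 2\,\L_{(\alpha,\beta)}$, and the desired conclusion $A(1/\sqrt2,1/\sqrt2)\geq A(\alpha,\beta)$ follows by combining the already-established optimality of $\widetilde B$ with the implication of Theorem \ref{thm_main}.

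Finally, the main obstacle is uniformity in the density $\delta$. The factorization that trivializes the upper bound has no counterpart for $A$, and the determinant/Kronecker-limit reduction is clean precisely because at $\delta=2$ one has the self-similarity $\L_{(\alpha,\beta)}^\circ = 2\,\L_{(\alpha,\beta)}$; for general $\delta>1$ this self-duality is lost, the zeta/theta series attached to the torus must be controlled uniformly in $\delta$, and I expect this to be the genuinely hard part. Indeed, as the paper itself notes, the full conjecture for arbitrary $\delta>1$ remains open, and my proposal only closes the argument completely in the distinguished case $\delta=2$.
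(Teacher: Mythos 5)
You should first be clear about the status of the statement you were given: it is a \emph{conjecture} in the paper, not a theorem, and the paper contains no proof of it. The paper records that the integer-density cases were settled in \cite{FaulhuberSteinerberger_Theta_2017}, and states explicitly that the general case for the lower frame bound ``is still open for all possible densities.'' So your proposal cannot be matched against a proof in the paper, and your own closing paragraph honestly concedes that you do not prove the statement either. What can be assessed is whether your sketch correctly reproduces the known cases and correctly locates the obstruction; there it contains a genuine error.

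The error is your claim that the lower bound $A$ ``does not factor and is therefore inaccessible to the product argument.'' Precisely where your explicit machinery is available --- Janssen's Proposition \ref{pro_Janssen}, which requires $(\alpha\beta)^{-1}\in\N$ --- the opposite is true for even densities: since the sign $(-1)^{kl/(\alpha\beta)}$ is identically $1$ when $(\alpha\beta)^{-1}\in2\N$, the sharp lower bound splits as
\begin{equation}
	A(\alpha,\beta)=(\alpha\beta)^{-1}\,\theta_4\Big(\tfrac{i}{2\alpha^2}\Big)\,\theta_4\Big(\tfrac{i}{2\beta^2}\Big),
\end{equation}
in complete analogy with $B$, and Theorem \ref{thm_FS} (maximality of $\theta_4(i r y^{-1})\theta_4(i r y)$ at $y=1$) is exactly how the $A$-part of the conjecture was proved at those densities in \cite{FaulhuberSteinerberger_Theta_2017} --- no determinant, eta function, or Kronecker limit formula is needed. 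Your determinant route for $A$ is, in substance, the paper's Theorem \ref{thm_main}; but that theorem is an \emph{implication} (optimality of $\widetilde B$ at all scalings forces optimality of $A$ at density $2$), it works only at $\delta=2$ because of the self-duality $\L^\circ_{(\alpha,\beta)}=2\,\L_{(\alpha,\beta)}$ you correctly note, and at $\delta=2$ the conclusion was already known by the factorization above --- so as a proof of the conjecture it adds nothing beyond what the product argument gives more directly. The genuine obstruction is different from the one you name: for non-integer $\delta$ there is no Janssen/Laurent formula for the sharp bounds at all (Proposition \ref{pro_Janssen} and the Zak-transform mechanism behind it require integer density), and for odd integer $\delta$ the surviving sign $(-1)^{kl}$ prevents the direct splitting, so even there the proof needs more than the clean product you describe. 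Any attempt along your lines must first produce a workable expression for $A$ and $B$ outside the even-integer-density regime; that, and not a failure of $A$ to factor, is where the conjecture remains open.
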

For integer densities of the lattice, Conjecture \ref{con_generalized_Floch} was proved by Faulhuber and Steinerberger \cite{FaulhuberSteinerberger_Theta_2017}. Also, for even integer densities of the lattice it was shown that the hexagonal lattice minimizes the upper frame bound \cite{Faulhuber_Hexagonal_2018}. The general case for the lower frame bound is still open for all possible densities\footnote{Tolimieri and Orr \cite{TolOrr95} mention that, for even densities, in private correspondence with Janssen they found a lengthy argument solving the problem for the upper frame bound in Conjecture \ref{con_generalized_Floch}. It is very likely that the argument they found is similar to the proof in \cite{FaulhuberSteinerberger_Theta_2017}.}.

\begin{figure}[ht]
	\includegraphics[width=.45\textwidth]{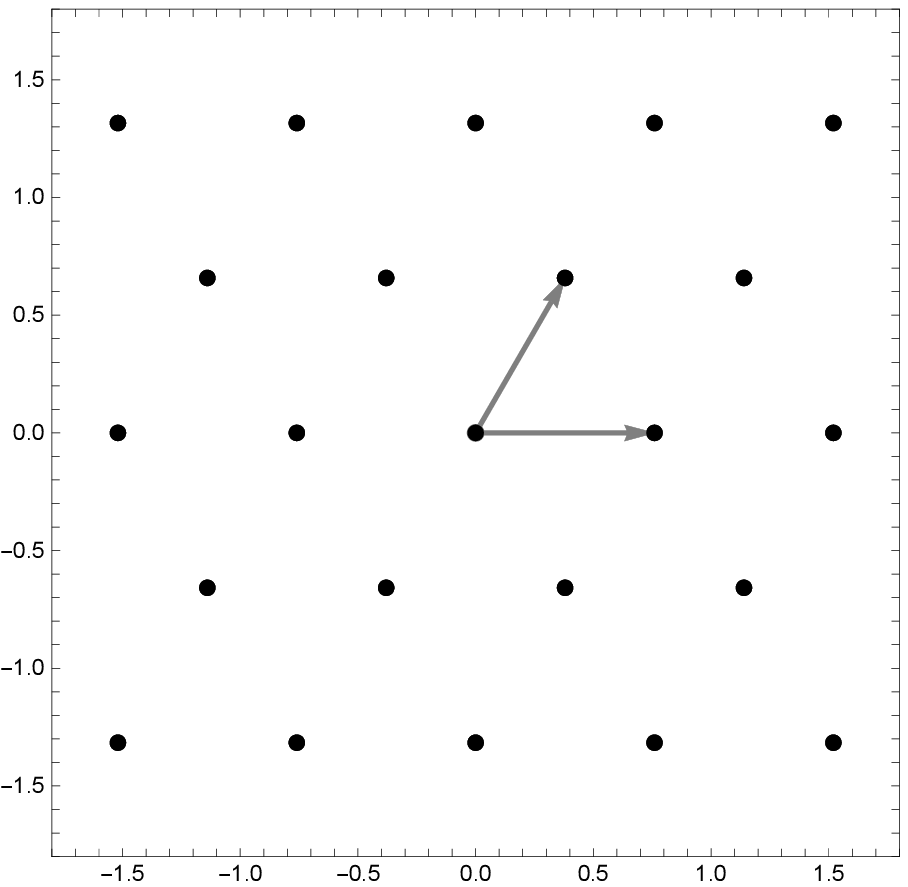}
	\hfill
	\includegraphics[width=.45\textwidth]{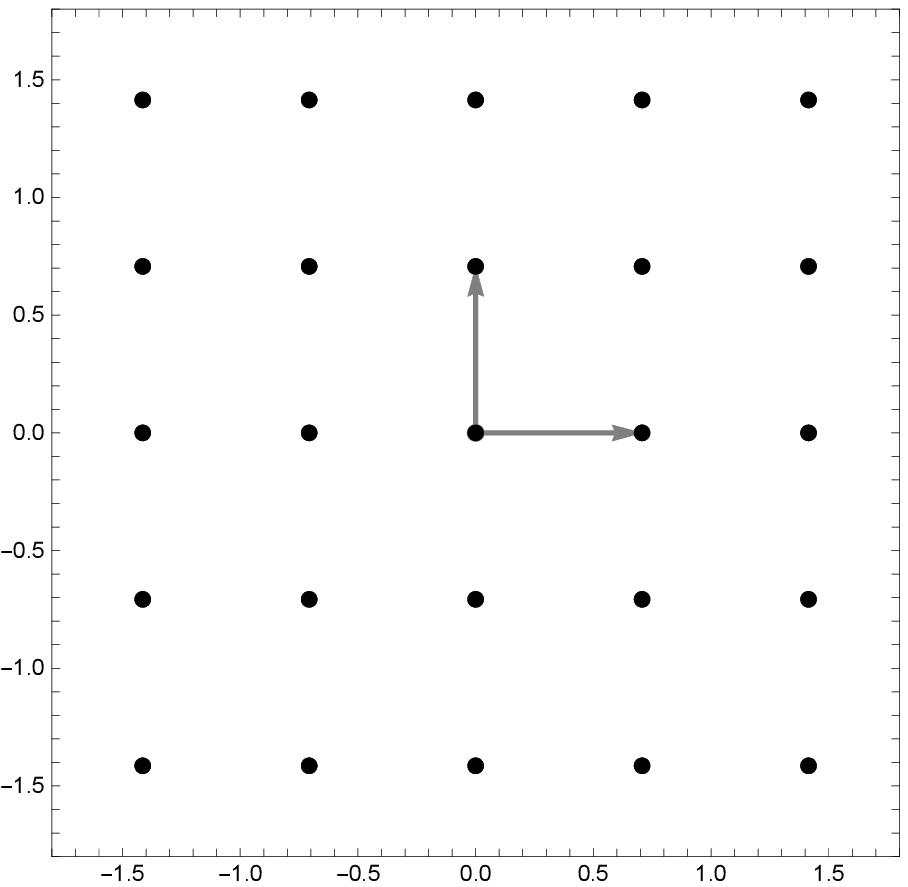}
	\caption{A hexagonal (sometimes called triangular) lattice and a square lattice of density 2. A possible basis is marked in both cases.}
\end{figure}

\section{Sharp Frame Bounds}\label{sec_Janssen}
\subsection{Janssen's Work}
Our starting point for proving Theorem \ref{thm_main} is the work of Janssen \cite{Jan95}, \cite{Jan96}. Janssen observed that for integer density the frame bounds can be computed exactly by finding the minimum and the maximum of certain Fourier series related to the frame operator. This follows from the duality theory which he described in \cite{Jan95} and which allows to compute the spectrum of the frame operator via the spectrum of a related, bi-infinite matrix. For integer density of the lattice this bi-infinite matrix has a Laurent structure and, hence, by the general theory of Toeplitz and Laurent matrices and operators, the spectrum can be calculated by using Fourier series with the matrix entries as coefficients \cite{Jan96}.

Janssen stated his results for $\Lt[]$ and separable lattices. However, since for $d=1$ any lattice is symplectic, this actually already covers all cases since any Gabor system can be transformed into a Gabor system with separable lattice without losing any of its properties by using a proper unitary operator from the so-called metaplectic group (see e.g.~\cite{Faulhuber_Invariance_2016}, \cite{Fol89}, \cite{Gos11}, \cite{Gos15}, \cite{Gro01}).

Let us now state the result in its original form, since we do not need a more general setting. A standard assumption is
\begin{equation}\label{eq_condA}
	\sum_{\l^\circ \in \L^\circ} \left| V_g g \left( \l^\circ \right) \right| < \infty. \tag{Condition A}
\end{equation}

As we will only need separable lattices in the sequel, we state the following result as given in \cite{Jan96}.
\begin{proposition}\label{pro_Janssen}
	Let $g \in \Lt[]$ and $\L_{(\alpha,\beta)} = \alpha \Z \times \beta \Z$, $(\alpha \beta)^{-1} \in \N$, such that \eqref{eq_condA} is fulfilled. Then the optimal frame bounds of the Gabor system $\G \left( g, \L_{(\alpha, \beta)} \right)$ are given by
	\begin{align}
		A & = \essinf_{(x,\omega) \in \R^2} \, (\alpha \beta) ^{-1} \sum_{k,l \in \Z} V_g g\left( \tfrac{k}{\beta}, \tfrac{l}{\alpha} \right) \, e^{2 \pi i \, (k x + l \omega)}\\
		B & = \esssup_{(x,\omega) \in \R^2} \, (\alpha \beta) ^{-1} \sum_{k,l \in \Z} V_g g\left( \tfrac{k}{\beta}, \tfrac{l}{\alpha} \right) \, e^{2 \pi i \, (k x + l \omega)}
	\end{align}
\end{proposition}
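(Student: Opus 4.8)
The plan is to realize the frame operator $S_{g,\L_{(\alpha,\beta)}}$ explicitly as a Fourier multiplier and then read off its spectrum. Since $S_{g,\L}$ is bounded, positive and self-adjoint, its optimal frame bounds are exactly the bottom and the top of its spectrum (this is precisely what the identities $A^{-1} = \norm{S_{g,\L}^{-1}}_{op}$ and $B = \norm{S_{g,\L}}_{op}$ recorded earlier say), so the whole task reduces to computing that spectrum. The mechanism that makes this tractable is that integer oversampling forces the time-frequency shifts along the adjoint lattice to commute, after which a simultaneous diagonalization turns $S_{g,\L}$ into multiplication by precisely the Fourier series in the statement.

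First I would invoke the Janssen representation of the frame operator: under \eqref{eq_condA} the series
\begin{equation}
	S_{g,\L_{(\alpha,\beta)}} = (\alpha\beta)^{-1} \sum_{k,l \in \Z} V_g g\left( \tfrac{k}{\beta}, \tfrac{l}{\alpha}\right) \, \pi\left( \tfrac{k}{\beta}, \tfrac{l}{\alpha}\right)
\end{equation}
converges in operator norm (because $\norm{\pi(\l^\circ)}_{op} = 1$ while the coefficients $V_g g(\l^\circ) = \langle g, \pi(\l^\circ) g\rangle$ are absolutely summable by \eqref{eq_condA}) and exhibits $S_{g,\L}$ as a superposition of time-frequency shifts along the adjoint lattice $\L_{(\alpha,\beta)}^\circ = \tfrac1\beta\Z \times \tfrac1\alpha\Z$.

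Next I would exploit the hypothesis $(\alpha\beta)^{-1} = n \in \N$. For two adjoint points $\l^\circ = (k/\beta, l/\alpha)$ and $\mu^\circ = (k'/\beta, l'/\alpha)$ the phase governing \eqref{eq_comm_rel} is
\begin{equation}
	\tfrac{k}{\beta}\tfrac{l'}{\alpha} - \tfrac{k'}{\beta}\tfrac{l}{\alpha} = n\,(kl' - k'l) \in \Z,
\end{equation}
so that $\pi(\l^\circ)\pi(\mu^\circ) = \pi(\mu^\circ)\pi(\l^\circ)$. Hence, exactly when the density is an integer, the shifts in the Janssen representation form a commuting family of unitaries, generated by the two commuting operators $T_{1/\beta}$ and $M_{1/\alpha}$ (the same computation with $k=l'=1$, $k'=l=0$ gives $M_{1/\alpha} T_{1/\beta} = T_{1/\beta} M_{1/\alpha}$). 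Because these commute, they can be simultaneously diagonalized: I would pass to a Zak-type transform adapted to the pair $(1/\beta, 1/\alpha)$, a unitary map of $\Lt[]$ onto an $L^2$ space over the fundamental cell of the dual variables $(x,\omega)$ that carries each $\pi(k/\beta, l/\alpha)$ to multiplication by $e^{2\pi i(kx + l\omega)}$. Under this transform the Janssen series becomes multiplication by the symbol
\begin{equation}
	F(x,\omega) = (\alpha\beta)^{-1} \sum_{k,l \in \Z} V_g g\left( \tfrac{k}{\beta}, \tfrac{l}{\alpha}\right) e^{2\pi i (kx + l\omega)},
\end{equation}
that is, $S_{g,\L}$ acquires a Laurent (constant-along-diagonals) structure with symbol $F$. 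The spectrum of a multiplication operator is the essential range of its multiplier, and $F$ is real-valued almost everywhere because $S_{g,\L}$ is self-adjoint; therefore $A = \essinf_{(x,\omega) \in \R^2} F(x,\omega)$ and $B = \esssup_{(x,\omega) \in \R^2} F(x,\omega)$, which is the claim.

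The hard part will be the diagonalization step: showing rigorously that a single unitary transform simultaneously sends the commuting pair $(T_{1/\beta}, M_{1/\alpha})$ to the two coordinate multiplications, and that it intertwines the operator-norm-convergent Janssen series with the Fourier series $F$. For critical density $\alpha\beta = 1$ this is the classical Zak transform; for $n > 1$ one needs its vector-valued refinement, which is equivalently the identification of $S_{g,\L}$ with the bi-infinite Laurent matrix whose spectral theory is governed by the symbol $F$, the route taken in \cite{Jan95} and \cite{Jan96}. The summability in \eqref{eq_condA} is exactly what justifies interchanging the series with the transform and, at the same time, guarantees that $F$ is a bounded, continuous periodic function, so that the essential infimum and supremum in the conclusion are in fact attained.
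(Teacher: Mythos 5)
Your proposal is correct and is essentially the argument the paper relies on: the paper states Proposition \ref{pro_Janssen} without proof, citing Janssen's duality theory, in which integer density gives the frame operator (through its Janssen representation) a Laurent structure whose spectrum is the essential range of the Fourier-series symbol $F$ --- precisely your commutation-plus-diagonalization mechanism. Your closing caveat is also the right one: for $(\alpha \beta)^{-1} = n > 1$ the joint spectral multiplicity of the commuting pair $\left(T_{1/\beta}, M_{1/\alpha}\right)$ on $\Lt[]$ is $n$, so the diagonalizing map must be the vector-valued (multiplicity-$n$) Zak transform rather than the scalar one, which is exactly the bi-infinite Laurent matrix identification of \cite{Jan95}, \cite{Jan96}.
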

A reformulation  of Proposition \ref{pro_Janssen}, using symmetric time-frequency shifts and the ambiguity function, as well as a generalization to lattices in higher dimensions, can be found in \cite{Faulhuber_Note_2018}.

\subsection{A Remark on Condition A}\label{sec_Cond_A}
We will now make a small excursion, showing that finding a good upper frame bound (not necessarily the least upper frame bound) already implies a good lower frame bound and, hence, a good condition number. Recall that the frame operator acts on functions by the rule
\begin{equation}
	S_{g, \L} f = \sum_{\l \in \L} \langle f, \pi(\l) g \rangle \, \pi(\l) g.
\end{equation}
The following result shows how to compute the distance between the identity operator $I_{L^2}$ and the frame operator using \eqref{eq_condA}. We have
\begin{equation}
	\norm{I_{L^2} - \vol(\L) S_{g,\L}}_{op} \leq \sum_{\l^\circ \in \L^\circ \backslash \{0\}} \left| V_g g \left( \l^\circ \right) \right|.
\end{equation}
This follows from Janssen's representation of the Gabor frame operator
\begin{equation}\label{eq_Janssen_rep}
	S_{g,\L} = \vol(\L)^{-1} \sum_{\l^\circ \in \L^\circ} \langle g, \pi(\l^\circ) g \rangle \, \pi(\l^\circ),
\end{equation}
which is basically a consequence of the so-called fundamental identity of Gabor analysis
\begin{equation}
	\sum_{\l \in \L} \langle f_1, \pi(\l) g_1 \rangle \, \langle \pi(\l) f_2, g_2 \rangle = \vol(\L)^{-1}
	\sum_{\l^\circ \in \L^\circ} \langle \pi(\l^\circ) f_1, g_2 \rangle \, \langle f_2, \pi(\l^\circ) g_1 \rangle.
\end{equation}
For more details we refer to \cite{FeiLue06}, \cite{FeiZim98}, \cite{Gro01}, \cite{Jan95}.

We will now assume that $g$ is normalized, i.e., $\norm{g}_2 = 1$. Also, and only for the rest of this section, we will normalize the frame inequality in the following sense;
\begin{equation}\label{eq_frame_normalized}
	A \norm{f}_2^2 \leq \vol(\L) \sum_{\l \in \L} |V_g f(\l)|^2 \leq B \norm{f}_2^2.
\end{equation}
The bounds in \eqref{eq_frame_normalized} fulfill (see e.g.~\cite[Thm.~5.1.]{JakLem_Density_2016})
\begin{equation}\label{eq_constants}
	0 \leq A \leq 1 \leq B < \infty.
\end{equation}
Note that we included the case where the frame operator is not invertible. The upper bound $B$ is finite as we assume \eqref{eq_condA}. In fact, the results in the work of Tolimieri and Orr \cite{TolOrr95} show that
	\begin{equation}
		\widetilde{B} = \sum_{\l^\circ \in \L^\circ} |V_g g(\l^\circ)|
	\end{equation}	
always is an upper bound, not necessarily optimal, for the normalized frame inequality \eqref{eq_frame_normalized}\footnote{Hence, we have that $\vol(\L)^{-1} \sum_{\l^\circ \in \L^\circ} |V_g g(\l^\circ)|$ always is an upper bound for the frame inequality $\eqref{eq_frame}$.}.

We get the following results, which are a direct consequence of \eqref{eq_Janssen_rep} and are also stated in \cite[Chap.~3.1]{Wie13}. There they are referred to as Janssen's test, which was already introduced in \cite{Tschurt_master}.
\begin{proposition}
	Assume that $g \in \Lt[]$ with $\norm{g}_2 = 1$ and fulfills \eqref{eq_condA}. If
	\begin{equation}
		\widetilde{B} < 2,
	\end{equation}
	then $\G(g,\L)$ is a frame. Also, if $\G(g,\L)$ is not a frame, then
	\begin{equation}
		\widetilde{B} \geq 2.
	\end{equation}
\end{proposition}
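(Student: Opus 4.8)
The plan is to deduce the frame property directly from the operator-norm estimate recorded just above the statement, using nothing more than the normalization $\norm{g}_2 = 1$. First I would isolate the $\l^\circ = 0$ term of $\widetilde{B}$: since $V_g g(0) = \langle g, g\rangle = \norm{g}_2^2 = 1$, subtracting it gives
\[
  \sum_{\l^\circ \in \L^\circ \backslash \{0\}} |V_g g(\l^\circ)| = \widetilde{B} - 1,
\]
so the estimate $\norm{I_{L^2} - \vol(\L) S_{g,\L}}_{op} \leq \sum_{\l^\circ \in \L^\circ \backslash \{0\}} |V_g g(\l^\circ)|$ turns into $\norm{I_{L^2} - \vol(\L) S_{g,\L}}_{op} \leq \widetilde{B} - 1$. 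This is the one identity the whole argument hinges on.

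Next I would exploit that $S_{g,\L}$ is, under \eqref{eq_condA}, a bounded self-adjoint positive operator (its Bessel bound is finite by the Tolimieri--Orr estimate). For such an operator the bound above controls the spectrum: writing $T = \vol(\L) S_{g,\L}$, every spectral value $\mu$ of $T$ satisfies $|1 - \mu| \leq \widetilde{B} - 1$, hence
\[
  (2 - \widetilde{B}) \, I_{L^2} \leq \vol(\L) \, S_{g,\L} \leq \widetilde{B} \, I_{L^2}
\]
in the sense of self-adjoint operators. If $\widetilde{B} < 2$ the lower constant $2 - \widetilde{B}$ is strictly positive, so $S_{g,\L}$ is bounded below and therefore boundedly invertible; by the equivalence between the frame inequality and bounded invertibility of the frame operator recalled in Section \ref{sec_TFA}, $\G(g,\L)$ is a frame, with admissible (though not necessarily sharp) normalized bounds $2 - \widetilde{B}$ and $\widetilde{B}$.

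Finally, the second assertion is just the contrapositive of the first: should $\G(g,\L)$ fail to be a frame, the implication already proved excludes $\widetilde{B} < 2$, leaving $\widetilde{B} \geq 2$. The only step needing a word of justification is the passage from the operator-norm bound to bounded invertibility, which is the standard fact that a self-adjoint operator lying within distance less than one of the identity is positive-definite (equivalently, a Neumann series for $T^{-1}$ converges). I therefore do not expect a real obstacle here: the entire content of the proposition is packaged in the already-established estimate $\norm{I_{L^2} - \vol(\L) S_{g,\L}}_{op} \leq \widetilde{B} - 1$, and the proof is essentially a bookkeeping of which term of $\widetilde{B}$ accounts for the identity operator.
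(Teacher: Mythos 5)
Your proposal is correct and takes essentially the same route as the paper: both rest on isolating the $\l^\circ = 0$ term of Janssen's representation (using $\norm{g}_2 = 1$) to get $\norm{I_{L^2} - \vol(\L) S_{g,\L}}_{op} \leq \widetilde{B} - 1 < 1$, conclude bounded invertibility of the frame operator, and obtain the second claim as the contrapositive. The only cosmetic difference is that you pass through the spectral inequalities $(2-\widetilde{B})\, I_{L^2} \leq \vol(\L) S_{g,\L} \leq \widetilde{B}\, I_{L^2}$ for the self-adjoint frame operator, whereas the paper invokes the Neumann series directly --- an equivalence you yourself point out.
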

\begin{proof}
	We use Janssen's representation of the Gabor frame operator and re-write it as
	\begin{equation}
		\vol(\L) S_{g,\L} = \sum_{\l^\circ \in \L^\circ} \langle g, \pi(\l^\circ) g \rangle \, \pi(\l^\circ) =
		\langle g, \pi(0) g \rangle \, \pi(0) + \sum_{\l^\circ \in \L^\circ \backslash \{0\}} \langle g, \pi(\l^\circ) g \rangle \, \pi(\l^\circ).
	\end{equation}
	Now, $\pi(0) = I_{L^2}$ and $\langle g, \pi(0)g \rangle = \norm{g}_2^2 = 1$. Therefore,
	\begin{equation}
		\norm{\vol(\L) S_{g,\L}}_{op} \leq \norm{I_{L^2}}_{op} + (\widetilde{B} - 1).
	\end{equation}		
	It follows that, if $\widetilde{B} < 2$, then
	\begin{equation}
		\norm{I_{L^2} - \vol(\L) S_{g,\L}}_{op} < \widetilde{B} - 1 < 1,
	\end{equation}
	hence, $\vol(\L) S_{g,\L}$ is invertible and its inverse can be expressed by a Neumann series. The second statement now follows trivially.
\end{proof}
Also, we get an estimate on the lower bound from \eqref{eq_condA} (see also \cite[Chap.~3.1]{Wie13}).
\begin{proposition}
	Assume that $g \in \Lt[]$ with $\norm{g}_2 = 1$ and fulfills \eqref{eq_condA}. Then we have the following estimates for the constants in \eqref{eq_frame_normalized}.
	\begin{equation}
		1 - A \leq \widetilde{B} - 1 \qquad \textnormal{ and } \qquad B \leq \widetilde{B}.
	\end{equation}
\end{proposition}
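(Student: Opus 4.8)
The plan is to deduce both inequalities at once from a single operator-norm estimate combined with the spectral picture of the frame operator. Set $T = \vol(\L) S_{g,\L}$. Since
$$\langle T f, f\rangle = \vol(\L) \sum_{\l \in \L} |V_g f(\l)|^2,$$
the normalized frame inequality \eqref{eq_frame_normalized} says precisely that $A \le \langle T f, f\rangle / \norm{f}_2^2 \le B$ for all $f \neq 0$, and optimality of the bounds means that $A = \inf \sigma(T)$ and $B = \sup \sigma(T)$, where $T$ is bounded (by \eqref{eq_condA}), positive and self-adjoint. In particular $\sigma(T) \subseteq [A,B]$ and, by \eqref{eq_constants}, $0 \le A \le 1 \le B < \infty$.

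First I would isolate the zero term of $\widetilde{B}$. Since $V_g g(0) = \langle g, g\rangle = \norm{g}_2^2 = 1$,
$$\widetilde{B} - 1 = \sum_{\l^\circ \in \L^\circ \backslash \{0\}} |V_g g(\l^\circ)|.$$
This is exactly the quantity controlling the distance of $T$ from the identity recorded in Section \ref{sec_Cond_A}: Janssen's representation \eqref{eq_Janssen_rep} yields
$$\norm{I_{L^2} - T}_{op} \le \sum_{\l^\circ \in \L^\circ \backslash \{0\}} |V_g g(\l^\circ)| = \widetilde{B} - 1.$$

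The key step is then to evaluate the left-hand side spectrally. The operator $I_{L^2} - T$ is bounded and self-adjoint, so its operator norm equals its spectral radius, $\norm{I_{L^2} - T}_{op} = \sup_{\mu \in \sigma(T)} |1 - \mu|$. From $\sigma(T) \subseteq [A,B]$ one gets $\sigma(I_{L^2} - T) \subseteq [1-B, 1-A]$, and since $0 \le A \le 1 \le B$ the relevant endpoints are $1 - A \ge 0$ and $B - 1 \ge 0$, whence $\norm{I_{L^2} - T}_{op} = \max\{1-A,\, B-1\}$. Combining this with the previous display gives simultaneously $1 - A \le \widetilde{B} - 1$ and $B - 1 \le \widetilde{B} - 1$, the latter being $B \le \widetilde{B}$, which is the claim.

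The bound $\norm{I_{L^2} - T}_{op} \le \widetilde{B} - 1$ is routine, being already available from the discussion following \eqref{eq_Janssen_rep}. The one point requiring care, and the only genuine obstacle, is the spectral step: I must justify that the optimal frame bounds $A$ and $B$ really are the infimum and supremum of $\sigma(T)$, so that $\sigma(T) \subseteq [A,B]$ with these values as the edges of the spectrum, and then invoke the self-adjoint identity $\norm{\cdot}_{op} = \sup_{\sigma}|\cdot|$. Both facts are standard for bounded positive self-adjoint operators once one observes that $A$ and $B$ are the extremal values of the Rayleigh quotient $\langle T f, f\rangle / \norm{f}_2^2$; the sign information $0 \le A \le 1 \le B$ from \eqref{eq_constants} is exactly what lets the single quantity $\max_{\mu}|1-\mu|$ split cleanly into the two desired inequalities.
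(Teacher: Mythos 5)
Your proof is correct, but it takes a genuinely different route from the paper's. The paper treats the two inequalities separately: it cites Tolimieri and Orr for $B \leq \widetilde{B}$, and proves $1 - A \leq \widetilde{B} - 1$ by a case distinction — if $\widetilde{B} \geq 2$ the claim is trivial from $A \geq 0$, while if $\widetilde{B} < 2$ the distance estimate $\norm{I_{L^2} - \vol(\L) S_{g,\L}}_{op} \leq \widetilde{B} - 1 < 1$ permits a Neumann-series expansion of the inverse, giving $A^{-1} \leq \bigl(1 - (\widetilde{B}-1)\bigr)^{-1}$, i.e.\ $A \geq 2 - \widetilde{B}$. You instead extract both inequalities simultaneously from that same distance estimate by identifying the optimal bounds spectrally: since $T = \vol(\L) S_{g,\L}$ is bounded, positive and self-adjoint, the optimal constants are $A = \min \sigma(T)$ and $B = \max \sigma(T)$, so $\norm{I_{L^2} - T}_{op} = \max\{1-A,\, B-1\}$, and the single bound $\norm{I_{L^2} - T}_{op} \leq \widetilde{B} - 1$ finishes the argument. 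What your approach buys: it is unified (no case split, no geometric series), it proves $B \leq \widetilde{B}$ directly rather than by citation, and it absorbs the non-invertible case $A = 0$ with no special care — handling that case is precisely what forces the paper's case distinction. What the paper's approach buys: it stays at the level of elementary norm estimates and the Neumann series, never invoking the spectral characterization of optimal frame bounds (standard, but a heavier tool). Two minor remarks: you only need the inequality $\max\{1-A,\, B-1\} \leq \norm{I_{L^2} - T}_{op}$, not the equality you state; and the sign information from \eqref{eq_constants} is not actually essential, since $1 - A \leq |1-A|$ and $B - 1 \leq |1-B|$ hold regardless, with both right-hand sides dominated by the spectral radius of $I_{L^2} - T$.
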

\begin{proof}
	The statement that $B \leq \widetilde{B}$ was proved by Tolimieri and Orr \cite{TolOrr95}. So, we only prove the statement for the lower frame bound.
		
	If $\widetilde{B} \geq 2$, then the statement is trivially true due to \eqref{eq_constants}. Hence, assume $\widetilde{B} < 2$, which already implies that we have a frame. This also means that
	\begin{equation}
		\norm{I_{L^2} - \vol(\L) S_{g,\L}}_{op} \leq \widetilde{B} - 1 < 1.
	\end{equation}
	Therefore, we have
	\begin{equation}
		A^{-1} = \norm{\left(\vol(\L) S_{g,\L}\right)^{-1}}_{op} \leq \sum_{n \in \N} (\widetilde{B} - 1)^n = \frac{1}{1-(\widetilde{B} - 1)}.
	\end{equation}
	Hence, the result follows.
\end{proof}
The previous result particularly shows that, if the bound $\widetilde{B}$ is close to 1, then the lower bound has to be close to 1 as well. This means, that a good upper frame bound has a good lower frame bound and a good condition number as a consequence. This already pinpoints into the direction of our main result, which is nonetheless stronger under its more restrictive assumptions.

\subsection{Gaussian Windows}
We will now use Proposition \ref{pro_Janssen} to compute sharp frame bounds for the standard Gaussian window $g_0(t) = 2^{1/4} e^{-\pi t^2}$, where the factor in front is chosen to normalize the Gaussian, i.e., $\norm{g_0}_2 = 1$. We start with noting that the standard Gaussian is invariant under the Fourier transform \cite[App.~A]{Fol89}, \cite[Chap.~1.5]{Gro01}, i.e.,
\begin{equation}
	\F g_0 (\omega) = g_0 (\omega).
\end{equation}
This fact, and variations of it involving dilations, will be used implicitly several times in the sequel, especially when using the Poisson summation formula. Furthermore, we note that the Gabor system studied by Gabor \cite{Gab46} was (up to scaling) $\G(g_0,\Z^2)$. Now, we know that this system is complete \cite{BarButGirKla71}, \cite{Per71} (see also \cite{GroHaiRom16}), but just fails to be a frame. Due to the results of Lyubarskii \cite{Lyu92} and Seip and Wallsten \cite{Sei92}, \cite{Sei92_1}, \cite{SeiWal92}, we know that the Gabor system $\G(g, \L)$ is a frame if and only if the lattice $\L$ has density $\delta(\L) > 1$\footnote{This result holds more generally for any (relatively separated) point set with lower Beurling density strictly greater than 1.}.

Another well-known fact about $g_0$ \cite[Chap.~1.5]{Gro01} (and also a nice exercise to verify) is that
\begin{equation}\label{eq_Vg0g0}
	V_{g_0}g_0 (x,\omega) = e^{-\pi i x \omega} e^{-\tfrac{\pi}{2} (x^2+\omega^2)}.
\end{equation}
By Proposition \ref{pro_Janssen}, it follows that the optimal frame bounds of the Gabor system $\G(g_0, \alpha \Z \times \beta \Z)$, $(\alpha \beta)^{-1} \in \N$ are given by
\begin{align}
	A(\alpha, \beta) & = (\alpha \beta) ^{-1} \sum_{k,l \in \Z} (-1)^{\tfrac{k l}{\alpha \beta}}(-1)^{k+l} e^{-\tfrac{\pi}{2} \left( \tfrac{k^2}{\beta^2} + \tfrac{l^2}{\alpha^2} \right)},\\
	B(\alpha, \beta) & = (\alpha \beta) ^{-1} \sum_{k,l \in \Z} (-1)^{\tfrac{k l}{\alpha \beta}}e^{-\tfrac{\pi}{2} \left( \tfrac{k^2}{\beta^2} + \tfrac{l^2}{\alpha^2} \right)}.
\end{align}
This was already computed by Janssen \cite[Sec.~6]{Jan96}.

We note that for $(\alpha \beta)^{-1} \in 2 \N$ the alternating sign, given by $(-1)^{\tfrac{k l}{\alpha \beta}}$, equals $+1$ for all $k,l \in \Z$. We note that in this case the double series splits into a product of two simple series of same type;
\begin{align}
	A(\alpha, \beta) & = (\alpha \beta) ^{-1} \left( \sum_{k \in \Z} (-1)^{k}  e^{-\tfrac{\pi}{2} \tfrac{k^2}{\beta^2}} \right) \,
	\left( \sum_{l \in \Z} (-1)^{l} e^{-\tfrac{\pi}{2} \tfrac{l^2}{\alpha^2}} \right),\\
	B(\alpha, \beta) & = (\alpha \beta) ^{-1} \left( \sum_{k \in \Z} e^{-\tfrac{\pi}{2} \tfrac{k^2}{\beta^2}} \right) \,
	\left( \sum_{l \in \Z} e^{-\tfrac{\pi}{2} \tfrac{l^2}{\alpha^2}} \right).
\end{align}
The expert reader will already draw the connection to theta functions which will be discussed in the next section.

\section{Some Tools from Analytic Number Theory}\label{sec_NT}
From here on, our analysis will be for arbitrary density of the lattice, i.e., $(\alpha \beta)^{-1} \in \R_+$. However, only if the density is an even, positive integer our formulas exactly describe the frame bounds of the Gaussian Gabor system. We will keep that fact in mind, even though we will not mention it explicitly.

\subsection{Theta Functions}\label{sec_theta}
We will recall how Conjecture \ref{con_generalized_Floch} was solved for even densities of the lattice by Faulhuber and Steinerberger \cite{FaulhuberSteinerberger_Theta_2017}. First, we introduce the functions $\theta_2$, $\theta_3$ and $\theta_4$. The literature on theta functions is extensive and any attempt to pick representative textbooks or surveys is doomed. We name the textbooks by Stein and Shakarchi \cite{SteSha_Complex_03} and Whittaker and Watson \cite{WhiWat69} as references.

As customary, we denote the complex plane by $\C$ and the upper half plane by $\H$. A theta function is a function of two complex variables $(z, \tau) \in \C \times \H$ and the classical theta function is
\begin{equation}\label{eq_Theta}
	\Theta(z, \tau) = \sum_{k \in \Z} e^{\pi i k^2 \tau} e^{2 \pi i k z}.
\end{equation}
In this work we will deal with the following list of theta functions which can be expressed by $\Theta$ in one way or another. We define
\begin{align}
	\vartheta_1(z,\tau) & = \sum_{k \in \Z} (-1)^{(k-1/2)} e^{\pi i (k+1/2)^2 \tau} e^{(2k+1)\pi i z},
	& &
	\vartheta_2(z,\tau) = \sum_{k \in \Z} e^{\pi i (k+1/2)^2 \tau} e^{(2k+1)\pi i z},\\
	\vartheta_3(z,\tau) & = \sum_{k \in \Z} e^{\pi i k^2 \tau} e^{2 k \pi i z},
	& &
	\vartheta_4(z,\tau) = \sum_{k \in \Z} (-1)^k e^{\pi i k^2 \tau} e^{2 k \pi i z}.
\end{align}
These functions are Jacobi's classical theta functions.

For our purposes, we will use Jacobi's theta functions where we fix the first argument to be $z = 0$ (so called ``theta nulls"). Hence, for $\tau \in \H$ we define the following theta functions.
\begin{align}
	\theta_2(\tau) & = \vartheta_2(0,\tau) = \sum_{k \in \Z} e^{\pi i \left(k + \tfrac{1}{2} \right)^2 \tau}\\
	\theta_3(\tau) & = \vartheta_3(0,\tau) = \sum_{k \in \Z} e^{\pi i k^2 \tau}\\
	\theta_4(\tau) & = \vartheta_4(0,\tau) = \sum_{k \in \Z} (-1)^k e^{\pi i k^2 \tau}
\end{align}
We note that $\theta_1(\tau) = \vartheta_1(0,\tau) = 0$ for all $\tau \in \H$. Any of the above functions also has a product representation, called the Jacobi triple product representation;
\begin{align}
	\theta_2(\tau) & = 2 \, e^{\tfrac{\pi i}{4} \tau} \prod_{k \in \N} \left(1 - e^{2 k \pi i \tau}\right)\left(1 + e^{2 k \pi i \tau}\right)^2\\
	\theta_3(\tau) & = \prod_{k \in \N} \left(1 - e^{2 k \pi i \tau}\right)\left(1 + e^{(2 k - 1) \pi i \tau}\right)^2\\
	\theta_4(\tau) & = \prod_{k \in \N} \left(1 - e^{2 k \pi i \tau}\right)\left(1 - e^{(2 k - 1) \pi i \tau}\right)^2
\end{align}
Later, we will restrict our attention to purely imaginary $\tau$ and note, that the above functions are real-valued and positive in this case.

It is a well-known fact and easily proved by using the Poisson summation formula (see Appendix \ref{sec_symplectic}) that
\begin{equation}\label{eq_t2_t4}
	\theta_2(\tau) = \sqrt{\tfrac{i}{\tau}} \, \theta_4(-\tfrac{1}{\tau}) \quad \textnormal{ and } \quad \theta_4(\tau) = \sqrt{\tfrac{i}{\tau}} \, \theta_2(-\tfrac{1}{\tau})
\end{equation}
as well as
\begin{equation}\label{eq_t3}
	\theta_3(\tau) = \sqrt{\tfrac{i}{\tau}} \, \theta_3(-\tfrac{1}{\tau}).
\end{equation}

We note that we can write the lower and upper bound of the Gaussian Gabor system $\G(g_0, \alpha \Z \times \beta \Z)$, $(\alpha \beta)^{-1} \in 2 \N$ as products of the above theta functions ($\theta_2$ and $\theta_4$ are always exchangeable by equation \eqref{eq_t2_t4}). 
\begin{align}
	A(\alpha, \beta) & = (\alpha \beta)^{-1} \, \theta_4 \Big( \tfrac{i}{2 \alpha^2} \Big) \, \theta_4 \Big( \tfrac{i}{2 \beta^2} \Big) \label{eq_boundA_theta} \\
	B(\alpha, \beta) & = (\alpha \beta)^{-1} \, \theta_3 \Big( \tfrac{i}{2 \alpha^2} \Big) \, \theta_3 \Big( \tfrac{i}{2 \beta^2} \Big) \label{eq_boundB_theta}
\end{align}
As already observed in \cite{FaulhuberSteinerberger_Theta_2017}, since the product $(\alpha \beta)^{-1}$ is fixed, the problem of finding extremal frame bounds can be reformulated as a problem of a (fixed) parameter and one variable. In \cite{FaulhuberSteinerberger_Theta_2017}, the following result is proved.
\begin{theorem}\label{thm_FS}
	For any fixed $r \in \R_+$, the function
	\begin{align}
		A_r(y) = \theta_4(i r y^{-1}) \theta_4(i r y), \qquad y \in \R_+
	\end{align}
	is maximal if and only if $y = 1$. Also, the function
	\begin{align}
		B_r(y) = \theta_3(i r y^{-1}) \theta_3(i r y), \qquad y \in \R_+
	\end{align}
	is minimal if and only if $y = 1$.
\end{theorem}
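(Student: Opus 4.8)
The plan is to exploit the manifest symmetry $y \mapsto y^{-1}$ of both $A_r$ and $B_r$ together with the observation that, as $y$ varies, the two arguments $s_1 = r y^{-1}$ and $s_2 = r y$ keep a fixed product $s_1 s_2 = r^2$. Passing to the logarithmic variable $t_i = \log s_i$, this constraint becomes $t_1 + t_2 = 2\log r$ (constant), with $y = 1$ corresponding to $t_1 = t_2 = \log r$. Introducing
\begin{equation}
	\phi(t) = \log \theta_4(i e^t), \qquad \psi(t) = \log \theta_3(i e^t), \qquad t \in \R,
\end{equation}
we have $\log A_r(y) = \phi(t_1) + \phi(t_2)$ and $\log B_r(y) = \psi(t_1) + \psi(t_2)$. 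If $\phi$ is strictly concave and $\psi$ is strictly convex on $\R$, then the two-point Jensen inequality, applied under the fixed-sum constraint, forces $\phi(t_1)+\phi(t_2)$ to attain its unique maximum and $\psi(t_1)+\psi(t_2)$ its unique minimum precisely at $t_1 = t_2$, i.e.\ at $y = 1$. Thus the theorem reduces to strict concavity of $\phi$ and strict convexity of $\psi$.

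For $\phi$ I would invoke the Jacobi triple product. With $\tau = i e^t$ it gives
\begin{equation}
	\phi(t) = \sum_{k \in \N} \Big[ \log\big(1 - e^{-2k\pi e^t}\big) + 2 \log\big(1 - e^{-(2k-1)\pi e^t}\big) \Big],
\end{equation}
so it suffices to show that each block $\rho_c(t) = \log(1 - e^{-c e^t})$, $c > 0$, is strictly concave. Writing $u = c e^t$ one finds $\rho_c'(t) = u/(e^u - 1)$, whence $\rho_c''(t)$ has the sign of $\frac{d}{du}\big(u/(e^u-1)\big)$, that is, the sign of $e^u(1-u) - 1$. This quantity vanishes at $u = 0$ and is strictly decreasing for $u > 0$, hence is negative, so $\rho_c'' < 0$. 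Since the series and its termwise derivatives converge locally uniformly, $\phi$ is a sum of strictly concave functions and is itself strictly concave. This already settles the statement for $A_r$.

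The convexity of $\psi$ is the real obstacle, and no term-by-term reasoning can succeed: the $\theta_3$ triple product contains factors $(1 + e^{-(2k-1)\pi\tau})$ which are neither convex nor concave in $t$, and pairing the double sum $\theta_3(is_1)\theta_3(is_2) = \sum_{k,l} e^{-\pi(k^2 s_1 + l^2 s_2)}$ reveals that the diagonal pairs $k^2 = l^2$ individually favour $y = 1$ as a \emph{maximum}; the desired minimum emerges only after summation. I would therefore translate convexity of $\psi$ into an inequality for the Gaussian weights $p_k \propto e^{-\pi k^2 s}$ on $\Z$: a direct computation gives $\psi''(t) = \pi s\big(\pi s\,\mathrm{Var}(k^2) - \langle k^2 \rangle\big)$, so strict convexity is equivalent to $\langle k^2 \rangle < \pi s\,\mathrm{Var}(k^2)$ for every $s > 0$. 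Clearing denominators and symmetrizing over two independent copies recasts this as the lattice inequality
\begin{equation}
	\sum_{(k,l) \in \Z^2} \Big[ (k^2 + l^2) - \pi s\,(k^2 - l^2)^2 \Big] e^{-\pi s (k^2 + l^2)} < 0 .
\end{equation}

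The decisive point is that the corresponding integral over $\R^2$ vanishes identically for every $s > 0$ (a short Gaussian-moment computation yields $\pi - \pi = 0$ after rescaling). Consequently, by the two-dimensional Poisson summation formula, the left-hand side equals the sum of the Fourier transform of the integrand over the \emph{nonzero} points of the dual lattice, and its sign is governed entirely by these correction terms. Since the integrand is a polynomial times a Gaussian, its transform is again explicit, and the contribution of the four nearest dual points $(\pm 1, 0), (0, \pm 1)$ dominates (each carrying a factor $e^{-\pi/s}$). I expect the main work of the proof to lie exactly here: verifying that this dominant contribution is strictly negative and that the remaining frequencies cannot overturn its sign uniformly in $s$. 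Everything else is elementary, and it is precisely this Poisson-summation step that makes the minimality of $B_r$ genuinely harder than the maximality of $A_r$.
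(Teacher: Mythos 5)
First, a point of reference: the paper does not prove Theorem \ref{thm_FS} itself; it quotes it from \cite{FaulhuberSteinerberger_Theta_2017}, so your proposal must be measured against that proof. Your reduction to the logarithmic variable --- strict concavity of $\phi(t)=\log\theta_4(ie^t)$ and strict convexity of $\psi(t)=\log\theta_3(ie^t)$, followed by the two-point Jensen argument under the constraint $t_1+t_2=\mathrm{const}$ --- is equivalent to the reduction used there (phrased in that paper as strict monotonicity of $x\mapsto x\,\tfrac{d}{dx}\log\theta_j(ix)$). Your $\theta_4$ half is complete and correct: each triple-product factor contributes $\log(1-e^{-ce^t})$, whose $t$-derivative is $u/(e^u-1)$ with $u=ce^t$, and this is strictly decreasing since $e^u(1-u)-1<0$ for $u>0$; this settles $A_r$. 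Your reformulation of the $\theta_3$ half as the lattice inequality $\sum_{(k,l)\in\Z^2}\bigl[(k^2+l^2)-\pi s(k^2-l^2)^2\bigr]e^{-\pi s(k^2+l^2)}<0$, and the observation that the corresponding integral over $\R^2$ vanishes, are also correct.

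The genuine gap is the final Poisson-summation step, which you explicitly leave as ``the main work of the proof'' --- and which, as you formulate it, would fail. Computing the Fourier transform of the integrand explicitly gives
\begin{equation}
	\widehat{F}(\xi,\eta) \;=\; \frac{1}{s^3}\Big[(\xi^2+\eta^2)-\frac{\pi}{s}\,(\xi^2-\eta^2)^2\Big]\, e^{-\pi(\xi^2+\eta^2)/s},
\end{equation}
so the problem is exactly self-dual: writing $S(s)$ for your lattice sum, Poisson summation yields $S(s)=s^{-3}S(1/s)$ and does nothing beyond exchanging the regimes $s\le1$ and $s\ge1$. In particular, for $s>\pi$ the four nearest dual points contribute $4s^{-3}(1-\pi/s)e^{-\pi/s}>0$, so the ``dominant contribution'' you want to be negative is in fact positive there, and the dual sum then converges slowly, so no nearest-point domination can hold uniformly in $s$. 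The missing idea is to use this self-duality (equivalently the modular relation $\theta_3(is)=s^{-1/2}\theta_3(i/s)$, which makes $\psi''$ an even function of $t$) to reduce the claim to $s\le1$. In that regime the argument does close: the nearest dual points give $-4s^{-4}(\pi-s)e^{-\pi/s}$, while the tail is bounded, using $(m^2-n^2)^2\le(m^2+n^2)^2$ and $e^{-\pi(m^2+n^2)/s}\le e^{-\pi/s}e^{-\pi(m^2+n^2-1)}$, by a constant times $s^{-4}e^{-\pi/s}$ that is numerically smaller than $4(\pi-1)$. With that insertion your outline becomes a correct proof, in the same spirit as the cited one (which also exploits the $x\mapsto1/x$ symmetry to work in a regime of rapidly convergent series); without it, the $B_r$ statement --- the substantial half of the theorem --- remains unproven.
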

This theorem is equivalent to saying that the Gabor frame bounds of the Gabor system $\G(g_0, \alpha \Z \times \beta \Z)$ are extremal if and only if $\alpha = \beta$.

A key ingredient in the work of Faulhuber and Steinerberger \cite{FaulhuberSteinerberger_Theta_2017} is to formulate a problem independent of the parameter $r \in \R_+$, hence independent of the density of the lattice. We note that, in \cite{FaulhuberSteinerberger_Theta_2017}, there is no hint in the proof of Theorem \ref{thm_FS} that the minimality of $B_r$ implies the maximality of $A_r$, not even for the case $r = 1$, which we essentially prove in Theorem \ref{thm_main}.

\subsection{Zeta Functions}\label{sec_zeta}
For this section we will identify $\R^2$ with $\C$. The lattice in focus is then $\alpha \Z \times i \, \beta \Z$, $\alpha, \beta \in \R_+$. Also, we will focus our attention on another class of special functions now, namely zeta functions related to Laplace--Beltrami operators on rectangular tori. A rectangular torus can be identified with a rectangular lattice;
\begin{equation}
	\mathbb{T}^2_{(\alpha, \beta)} = \C \big/ (\alpha \Z \times i \, \beta \Z).
\end{equation}

We will now mainly follow the work of Baernstein and Vinson \cite{BaernsteinVinson_Local_1998} and the work of Osgood, Philips and Sarnak \cite{Osgood_Determinants_1988}. For the torus $\mathbb{T}^2_{(\alpha,\beta)}$ we denote its Laplace--Beltrami operator by $\Delta_{(\alpha,\beta)}$. We note that the eigenfunctions of $\Delta_{(\alpha,\beta)}$ are given by
\begin{equation}
	f_{(\alpha,\beta)}^{k,l}(z) = e^{2 \pi i \, Re\left( z \cdot \left(\tfrac{k}{\alpha} + i \tfrac{l}{\beta} \right)\right)}
\end{equation}
with eigenvalues\footnote{We chose the sign of $\Delta_{(\alpha,\beta)}$ such that the eigenvalues are non-negative.}
\begin{equation}
	\l_{(\alpha,\beta)}^{k,l} = \left(2 \pi \left| \tfrac{k}{\alpha} + i \, \tfrac{l}{\beta} \right| \right)^2.
\end{equation}
Consequently, for $t \in \R_+$, the heat kernel is then given by the formula
\begin{equation}
	p_{(\alpha,\beta)}(z;t) = \sum_{k,l \in \Z} e^{-t \l_{(\alpha,\beta)}^{k,l}} f_{(\alpha,\beta)}^{k,l}(z).
\end{equation}
At this point, we remark that $(\frac{k}{\alpha} + i \frac{l}{\beta})$ is an element of the dual lattice of $\alpha \Z \times i \, \beta \Z$ and that, in general, the heat kernel on a (complex) torus can be explicitly written in terms of the (complex) dual lattice $\L^\perp$;
\begin{equation}
	p_\L(z;t) = \sum_{\l^\perp \in \L^\perp} e^{-4 \pi^2 t |\l^\perp|^2} e^{2 \pi i z \cdot \l^\perp}.
\end{equation}

Getting back to the family of tori $\T_{(\alpha, \beta)}^2$, the trace of the heat kernel associated to $\Delta_{(\alpha,\beta)}$ is given by
\begin{equation}
	\textnormal{tr}(p_{(\alpha,\beta)})(t) = \sum_{k,l \in \Z} e^{-t \l_{(\alpha,\beta)}^{k,l}} = \sum_{k,l \in \Z} e^{-4 \pi^2 t \left( \tfrac{k^2}{\alpha^2} + \tfrac{l^2}{\beta^2} \right)}
\end{equation}
and its zeta function is given by
\begin{align}\label{eq_zeta}
	Z_{(\alpha,\beta)}(s) & = \sump{k,l \in \Z} \left(\l_{(\alpha,\beta)}^{k,l}\right)^{-s} = (2 \pi)^{-2 s} \sump{k,l \in \Z} \left| \tfrac{k}{\alpha} + i \, \tfrac{l}{\beta} \right|^{-2s}\\
	& =	(2 \pi)^{-2 s} (\alpha \beta)^{s} \sump{k,l \in \Z} \frac{\left(\tfrac{\alpha}{\beta}\right)^s}{|k + i \, \tfrac{\alpha}{\beta} l|^{2s}} \, ,
\end{align}
where the prime indicates that the sum excludes the origin. The determinant of the Laplace--Beltrami operator is formally given as the product of the non-zero eigenvalues,
\begin{equation}
	\det \Delta_{(\alpha,\beta)} = \prod_{\l_{(\alpha,\beta)}^{k,l} \neq 0} \l_{(\alpha,\beta)}^{k,l}.
\end{equation}
This product is not always meaningful and, usually, one uses the zeta regularized determinant of the Laplace--Beltrami operator $\Delta_{(\alpha,\beta)}$ instead, which is given by
\begin{equation}
	\detp \, \Delta_{(\alpha,\beta)} = e^{- \tfrac{d}{ds} Z_{(\alpha,\beta)} \big|_{s=0}}.
\end{equation}
The problem under consideration is to find the pair $(\alpha,\beta)$ which maximizes the determinant $\detp \, \Delta_{(\alpha,\beta)}$ for fixed volume of the torus, i.e., $0 < \alpha \beta$ fixed. We note that the last line in equation \eqref{eq_zeta} is, up to the factor in front, a real analytic Eisenstein series. In general, for $\tau = x + i y \in \mathbb{H}$ and $Re(s) > 1$, the real analytic Eisenstein series is given by
\begin{equation}
	E(\tau,s) = \sump{k,l \in \Z} \frac{y^s}{|k + l \tau|^{2s}}.
\end{equation}
It can be continued analytically for $s \in \C$ with a pole at $s = 1$ with residue $\pi$, which follows from Kronecker's limit formula given below. In order to compute the regularized determinant $\detp \Delta_{(\alpha,\beta)}$, Osgood, Philips and Sarnak employ Kronecker's limit formula \cite[eq.~(4.4)]{Osgood_Determinants_1988}
\begin{align}
	E(\tau,0) & = -1\\
	\frac{\partial}{\partial s} E \Big|_{s=0} & = - 2 \log\left( 2 \pi \, y^{1/2} |\eta(\tau)|^2\right).
\end{align}
Here, $\eta$ is the Dedekind eta function which we will study in more detail in the upcoming section.

By the last results, it follows that
\begin{align}
	\frac{d}{ds} Z_{(\alpha, \beta)} \Big|_{s=0} & = -2 \log \left( \tfrac{2 \pi}{(\alpha \beta)^{1/2}} \right) E\left( i \, \tfrac{\alpha}{\beta},0\right) + \frac{\partial}{\partial s} E \Big|_{s=0}\\
	& = -\log(\alpha \beta) - \log \left( \left( \tfrac{\alpha}{\beta} \right) \left| \eta \left( i \, \tfrac{\alpha}{\beta}\right) \right|^4 \right).
\end{align}
Therefore, the determinant is given by
\begin{equation}\label{eq_det_eta}
	\detp \, \Delta_{(\alpha,\beta)} = (\alpha \beta ) \left( \tfrac{\alpha}{\beta} \right) \left| \eta \left( i \, \tfrac{\alpha}{\beta}\right) \right|^4.
\end{equation}
Since we fixed the product $\alpha \beta$, which expresses the surface area of the torus, we see that the problem of maximizing the determinant is invariant under scaling and we may therefore assume that $\alpha \beta = 1$.

For the rest of this work, we choose the following normalization and notation for our torus;
\begin{equation}
	\mathbb{T}^2_\alpha = \C \Big/ \left(\alpha^{1/2} \Z \times i \, \alpha^{-1/2} \Z \right).
\end{equation}
Likewise, in the sequel we will use the index $\alpha$ for the index pair $(\alpha^{1/2}, \alpha^{-1/2})$, appearing in the Laplace--Beltrami operator, the heat kernel, the determinant and the zeta function.

For general tori, it was shown in \cite{Osgood_Determinants_1988} that the hexagonal torus is the unique maximizer of the determinant by using a proof which combines analytic and numerical methods. For rectangular tori, it was shown in \cite{Faulhuber_Determinants_2018} that the square torus is the unique maximizer by purely analytic means. It also follows from the results in Montgomery's work \cite{Montgomery_Theta_1988} that the hexagonal torus maximizes the determinant among all tori of given surface area. That was also remarked in the work of Baernstein and Vinson \cite{BaernsteinVinson_Local_1998}. Likewise, it follows from one of the results of Faulhuber and Steinerberger \cite{FaulhuberSteinerberger_Theta_2017} that the square torus is the maximizer of the determinant among all rectangular tori of given surface area.

We will shortly sketch the arguments used in \cite{BaernsteinVinson_Local_1998} and \cite{Montgomery_Theta_1988}. For brevity, we set
\begin{equation}
	W_\alpha(t) = \textnormal{tr} \left( p_\alpha \right)(t) = \sum_{k,l \in \Z} e^{-4 \pi^2 t \left( \tfrac{k^2}{\alpha} + \alpha \, l^2 \right)}.
\end{equation}
The results in \cite{FaulhuberSteinerberger_Theta_2017} and \cite{Montgomery_Theta_1988} show that
\begin{equation}\label{eq_Wa_W1}
	W_\alpha(t) \geq W_1(t), \qquad \forall t \in \R_+
\end{equation}
with equality if and only if $\alpha = 1$.

We will show how this result already implies the result for the determinant. Now, the zeta function also has an integral representation and it is connected to the heat kernel via the Mellin transform \cite{Jorgenson_Heat_2001}.
\begin{equation}\label{eq_xi}
	\xi_\alpha = (2 \pi)^s \, \Gamma(s) Z_\alpha (s) = \int_{\R_+} (W_\alpha(t) - 1) \, t^{s-1} \, dt, \qquad s \in \C, \, Re(s) > 1,
\end{equation}
where $\Gamma$ denotes the Gamma function. The function $\xi_\alpha$ extends to a meromorphic function with a simple pole of order 1 at $s=1$ \cite{Deu37}. Montgomery \cite{Montgomery_Theta_1988} uses the regularized version
\begin{equation}\label{eq_zeta_regularized}
	(2 \pi)^s \, \Gamma(s) Z_\alpha (s) = \frac{1}{s-1} - \frac{1}{s} +\int_1^\infty \left(W_\alpha(t) - 1\right) \left(t^{s-1} + t^{-s} \right) \, dt,
\end{equation}
which is well defined for $s \in \C$, except for the points 0 and 1, and yields the functional equation
\begin{equation}\label{eq_zeta_functional}
	(2 \pi)^s \, \Gamma(s) Z_\alpha (s) = (2 \pi)^{1-s} \, \Gamma(1-s) Z_\alpha (1-s).
\end{equation}
We repeat now the arguments given in \cite{BaernsteinVinson_Local_1998}. Since $W_\alpha(t)$ does not depend on $s$, by differentiating both sides of \eqref{eq_zeta_regularized}, it follows that
\begin{equation}
	Z'_\alpha (s) \geq Z'_1(s), \qquad s > 1,
\end{equation}
where the prime indicates differentiation with respect to $s$. By taking the limit $s \to 1$ and using the functional equation \eqref{eq_zeta_functional}, we get
\begin{equation}
	Z'_\alpha (0) \geq Z'_1 (0)
\end{equation}
and, since $\detp \Delta_\alpha = e^{-Z_\alpha '(0)}$,
\begin{equation}
	\detp \Delta_\alpha \leq \detp \Delta_1, \qquad \forall \alpha \in \R_+
\end{equation}
with equality if and only if $\alpha = 1$.

\subsection{The Dedekind Eta Function}
The last function we investigate is the Dedekind eta function. We start by defining the function by its product representation;
\begin{equation}\label{eq_Dedekind}
	\eta(\tau) = e^{\pi i \tau/12} \prod_{k \in \N} (1 - e^{2 k \pi i \tau}), \qquad \tau \in \mathbb{H}.
\end{equation}
We note that it can be expressed as a product of theta functions in the following way (see e.g.~\cite[Chap.~3]{Borwein_AGM_1987}, \cite[Chap.~4]{ConSlo98} or \cite[eq.~(21.41) p.~463, Ex.~1 p.~466]{WhiWat69})\footnote{The function $\tfrac{\partial \vartheta_1}{\partial z}(0,\tau)$ is often denoted by $\theta_1'(\tau)$.};
\begin{equation}
	2 \, \eta(\tau)^{3} = \dfrac{\partial}{\partial z} \vartheta_1(z,\tau) \Big|_{z=0} = \vartheta_2(0, \tau) \, \vartheta_3(0, \tau) \, \vartheta_4(0, \tau).
\end{equation}

Since we are interested in rectangular tori, we are only interested in purely imaginary arguments $\tau = i y$, $y \in \R_+$. We note that in this case it follows from formula \eqref{eq_Dedekind} defining $\eta$, that $\eta(i y) \in \R_+$. Also, we have the following connection to the theta functions defined in Section \ref{sec_theta};
\begin{equation}\label{eq_eta_theta}
	2 \, \eta(i y)^3 = \theta_2(i y) \, \theta_3(i y) \, \theta_4(i y).
\end{equation}

\subsection{Proof of Theorem \ref{thm_main}}
We collected all the material to prove our main result. By using the Jacobi identities \eqref{eq_t2_t4} and \eqref{eq_t3}, we write the optimal Gabor frame bounds, of the Gaussian Gabor system $\G \left(g_0, \tfrac{1}{\sqrt{2}} \L_y \right)$ with $\L_y = (y^{1/2} \Z \times y^{-1/2} \Z)$, $y \in \R_+$, given by \eqref{eq_boundA_theta} and \eqref{eq_boundB_theta}, as
\begin{align}
	A \left( \L_y \right) & = 2 \, \theta_4(i y^{-1}) \, \theta_4(i y) = 2 \, y^{1/2} \theta_2(i y) \, \theta_4(i y)\\
	B \left( \L_y \right) & = 2 \, \theta_3(i y^{-1}) \, \theta_3(i y) = 2 \, y^{1/2} \theta_3(i y) \, \theta_3(i y).
\end{align}

We start now by taking both sides of equation \eqref{eq_eta_theta} to the power 4 and then multiply by $y^3$, which gives in combination with \eqref{eq_det_eta}
\begin{equation}\label{eq_det_theta}
	2^4 \, (\detp \Delta_y)^3 = 2^4 \, y^3 \, \eta(i y)^{12} = y^3 \, \theta_2(i y)^4 \theta_3(i y)^4 \theta_4(i y)^4.
\end{equation}
By multiplying both sides of the last equation by $2^6$, it follows that
\begin{equation}\label{eq_det_A_B}
	2^{10} \, (\detp \Delta_y)^3 = \left(2 \, y^{1/2} \theta_2(i y) \theta_4(i y) \right)^4 \, \left(2 \, y^{1/2} \theta_3(i y)^2 \right)^2= A(\L_y)^4 B(\L_y)^2.
\end{equation}
Also, we note that for all $t \in \R_+$ we have
\begin{equation}\label{eq_W_t3}
	W_{y}\left( \tfrac{t}{4\pi} \right) = \sum_{k,l \in \Z} e^{-\pi t \left( \tfrac{k^2}{y} + y \, l^2\right)} = \widetilde{B} \left( \tfrac{t}{\sqrt{2}} \, \L_y \right) = \theta_3(i t \, y^{-1}) \, \theta_3(i t \, y) = y^{1/2} \theta_3(i t \, y)^2,
\end{equation}
which by Theorem \ref{thm_FS} is minimal if and only if $y = 1$. Starting from the minimality of the trace of the heat kernel $W_1(t)$, we obtain the following chain of implications from the results in Section \ref{sec_zeta}, with details provided below;
\begin{align}\label{eq_main}
	W_y(t) \geq W_1(t), \; \forall t \in \R_+
	& \Longrightarrow \detp \Delta_y \leq \detp \Delta_1 \\
	& \Longrightarrow y^{1/2} \, \theta_2(i y) \, \theta_4(i y) \leq \theta_2(i) \, \theta_4(i),
\end{align}
The first implication uses the following assertions from Section \ref{sec_zeta};
\begin{equation}
	W_y(t) \geq W_1(t), \; \forall t \in \R_+
	\Longrightarrow Z'_y(0) \geq Z'_1(0)
	\Longrightarrow \detp \Delta_y \leq \detp \Delta_1 .
\end{equation}
By combining \eqref{eq_det_theta} and \eqref{eq_W_t3}, we get $2^4 \left(\detp \Delta_y \right)^3 = \left(y^2 \, \theta_2(i y)^4 \,\theta_4(i y)^4 \right) \, W_y \left( \tfrac{1}{4\pi} \right)^2$ (with $t = 1$). Therefore, the maximality of the determinant $\detp \Delta_y$ for $y = 1$ implies the maximality of the product $y^2 \, \theta_2(i y)^4 \,\theta_4(i y)^4$ for $y = 1$, as the trace of the heat kernel $W_y(1)$ is minimal in this case. Since the result on the determinant is implied by the result on the heat kernel, the fact that the trace $W_y(t)$ is minimal for $y = 1$ (for all $t \in \R_+$) also implies that the product $y^{1/2} \, \theta_2(i y) \,\theta_4(i y)$ is maximal for $y = 1$.

In the view of Theorem \ref{thm_FS}, we have that $B_r(y) \geq B_r(1)$ for all $r \in \R_+$ implies that $A_1(y) \leq A_1(1)$ (note that the index of $A$ is $r=1$). However, Theorem \ref{thm_FS} also tells us that $A_r(y) \leq A_r(1)$ for all $r \in \R_+$ and this follows independently form the result on $B_r$ (see the proof in \cite{FaulhuberSteinerberger_Theta_2017}).

As a further consequence, by using \eqref{eq_det_A_B}, we get the following chain of implications for the Gabor frame $\G \left(g_0, \tfrac{1}{\sqrt{2}} \L_y \right)$;
\begin{align}
	\widetilde{B} \left( \sqrt{\tfrac{t}{2}} \, \L_y \right) \geq \widetilde{B} \left( \sqrt{\tfrac{t}{2}} \, \L_1 \right), \, \forall t \in \R_+
	\Longrightarrow & \, A \left(\tfrac{1}{\sqrt{2}} \L_y \right)^4 B \left(\tfrac{1}{\sqrt{2}} \L_y \right)^2 \leq A \left(\tfrac{1}{\sqrt{2}} \L_1 \right)^4 B \left(\tfrac{1}{\sqrt{2}} \L_1 \right)^2 \\
	\Longrightarrow & \, A \left(\tfrac{1}{\sqrt{2}} \L_y \right) \leq A \left(\tfrac{1}{\sqrt{2}} \L_1 \right).
\end{align}
For the first implication we used the fact that for $t=1$ we have $2 \widetilde{B}\left( \tfrac{1}{\sqrt{2}} \, \L_y \right) = B \left(\tfrac{1}{\sqrt{2}} \L_y \right)$\footnote{The factor 2 in front of $\widetilde{B}$ is, of course, determined by the density of the Gabor system and the fact that we defined $\widetilde{B}$ in a way to suit the re-normalized results in Section \ref{sec_Cond_A}. More generally, for $n \in \N$, we have $2n \, \widetilde{B}\left( \tfrac{1}{\sqrt{2n}} \, \L_y \right) = B \left(\tfrac{1}{\sqrt{2n}} \L_y \right)$, as already remarked in \cite[Sect.~4]{TolOrr95}.}. The remaining argument is to note that the product $A^4 B^2$ of the Gabor frame bounds yields (up to a constant) the determinant of the Laplace--Beltrami operator. This completes the proof of Theorem \ref{thm_main}. \hfill $\qed$

\section{The Different Aspects of the Proof}\label{sec_discuss}
We note that the proof relies on several curious facts. We will discuss the different aspects in this section and start with an overview.

First of all, the results of Faulhuber and Steinerberger \cite{FaulhuberSteinerberger_Theta_2017} and of Montgomery \cite{Montgomery_Theta_1988} are results about periodized Gaussian functions on a lattice. The parameter appearing in those works should be interpreted as the density of the lattice. However, the results can also be interpreted as problems for a family of heat kernels on a torus with varying metric. Only for very special cases the results describe the exact behavior of Gaussian Gabor frame bounds. Also, we note that we needed the optimality of the Tolimieri and Orr bound for the square lattice for all densities to establish the optimality of the lower frame bound for the square lattice of density 2. All attempts of the author to show that the results are independent of the parameter $t$, hence holding for arbitrary (even) density, ended up to be equivalent to Theorem \ref{thm_FS}. But then the optimality of the lower frame bound follows independently from the optimality of the upper frame bound. Hence, it would be nice to find an argument not relying on \cite{FaulhuberSteinerberger_Theta_2017}, which shows that the frame bound problem is invariant under scaling.

Another curiosity is that Jacobi's $\vartheta_3$ function as well as the Dedekind eta function are actually modular functions of positive weight with respect to $\tau \in \H$. The frame bounds of a Gaussian Gabor frame are also modular forms, however, of weight 0. It is the separability of the problem which makes the proof of our Theorem \ref{thm_main} work. At the moment, it seems as if the techniques used in this work, will not work for non-separable lattices.

But before going into detail, we will shortly explain why density 2 is such a special case.

\subsection{Why Density 2?}
We will now shortly describe one key aspect, why our result was only established for density 2. Recall, that the sharp frame bounds are derived by sampling $V_{g_0}g_0$ on the adjoint time-frequency lattice and that we have, according to formula \eqref{eq_Vg0g0},
\begin{equation}
	V_{g_0}g_0 (x, \omega) = e^{-\pi i x \omega} e^{- \tfrac{\pi}{2} (x^2 + \omega^2)}.
\end{equation}
The adjoint of the square lattice of density 2, $\L = \tfrac{1}{\sqrt{2}} \Z^2$, is given by $\L^\circ = \sqrt{2} \, \Z^2$. This means that for $(k,l) \in \Z^2$ we have
\begin{equation}
	V_{g_0}g_0(\sqrt{2} \, k, \sqrt{2} \, l) = e^{-\pi (k^2 + l^2)}.
\end{equation}
We have that $e^{-\pi (x^2+\omega^2)}$ is a fixed point of the (symplectic) Fourier transform. Hence, Poisson's summation formula is particularly nice in this case and, also, the above formula allows us to easily connect to Jacobi's theta functions and the Dedekind eta function with purely imaginary arguments.

\subsection{A Problem for the Heat Kernel}
The results of Faulhuber and Steinerberger \cite{FaulhuberSteinerberger_Theta_2017} and of Montgomery \cite{Montgomery_Theta_1988} also provide solutions about a problem concerning the temperature on a torus. For a more recent and thorough study on the temperature problem on the torus we refer to \cite{Faulhuber_Rama_2019}.

We identify the torus with a fundamental cell\footnote{The fundamental cell of a lattice is the parallelogram spanned by the basis vectors of $\L$.} of the lattice $\L$, $\vol(\L)$ fixed, and denote it by
\begin{equation}
	\T_\L^2 = \R^2 \big/ \L.
\end{equation}
Recall from Section \ref{sec_zeta} that the explicit formula for the heat kernel on a torus involves the dual lattice. By using the Poisson summation formula, the corresponding heat kernel can be written using the lattice, which defines the torus, rather than its dual;
\begin{equation}
	p_\L (z;t) = \frac{1}{4 \pi t} \sum_{\l \in \L} e^{-\tfrac{\pi}{4 \pi t} |\l + z|^2}, \qquad z \in \T_\L^2.
\end{equation}
By the symplectic version of Poisson's summation formula (see Appendix \ref{sec_symplectic}) we have the alternative representation
\begin{equation}
	\widetilde{p}_\L (z;t) = \vol(\L)^{-1} \sum_{\l^\circ \in \L^\circ} e^{-\pi (4 \pi t) \, |\l^\circ|^2} e^{2 \pi i \sigma(\l^\circ, z)}.
\end{equation}
The two formulas describe, of course, the same temperature distribution, i.e.,
\begin{equation}
	p_\L(z;t) = \widetilde{p}_\L(z;t).
\end{equation}
For time $t > 0$, we denote the minimal and the maximal temperature of the heat kernel by
\begin{align}
	m_\L(t) = \min_{z \in \T_\L^2} \, \widetilde{p}_\L(z;t)
	\qquad \textnormal{ and } \qquad
	M_\L(t) = \max_{z \in \T_\L^2} \, \widetilde{p}_\L(z;t).
\end{align}
By using the triangle inequality, we easily conclude that
\begin{equation}
	\widetilde{p}_\L(z;t) \leq \widetilde{p}_\L (0;t),
\end{equation}
which shows that $M_\L(t)$ is taken at the origin and, due to periodicity, at any other lattice point of $\L$. Also, $M_\L(t)$ equals the trace $W_\L(t)$ of the heat kernel. Locating the minimum is not as easy and the author is not aware of a closed expression for this problem. For separable lattices and for the hexagonal lattice, the minimum is achieved in a ``deep hole". This is where one would naturally expect the minimum as this is the point with the largest distance to its closest neighbors. It is also the point which is covered last if one places disks of radius $\varepsilon$ at the lattice points and blows them up until they cover the whole plane. However, the generic non-separable case seems to be that the minimum is achieved at some point close to a ``deep hole" and that the location also depends on the parameter $t$. Similar observations are described in the article by Baernstein and Vinson \cite{BaernsteinVinson_Local_1998}.

Montgomery's result \cite{Montgomery_Theta_1988} shows that
\begin{equation}
	M_{\L_h}(t) \leq M_\L(t), \qquad \forall t \in \R_+,
\end{equation}
where $\L_h$ is the hexagonal lattice. The results of Faulhuber and Steinerberger \cite{FaulhuberSteinerberger_Theta_2017} show that
\begin{equation}
	m_1(t) \geq m_\alpha (t) \qquad \textnormal{ and } \qquad M_1 (t) \leq M_\alpha (t), \qquad \forall t \in \R_+,
\end{equation}
where we used the notation for separable tori from Section \ref{sec_zeta}. The problem remaining open, which is also addressed in \cite[Sec.~6, eq.~(6.3)]{Faulhuber_Rama_2019}, is whether
\begin{equation}
	m_{\L_h}(t) \stackrel{(?)}{\geq} m_\L(t), \qquad \forall t \in \R_+,
\end{equation}
which is closely related to Conjecture \ref{con_generalized_Strohmer_Beaver}.

We note the close connection of the above problems to the Strohmer and Beaver Conjecture. The key observation is that the time variable $t$ can also be used to describe the volume of the lattice. Consider the Gaussian Gabor system
\begin{equation}
	\G (g_0, (8 \pi t)^{-1/2} \L)
\end{equation}
with $\vol(\L) = 1$. This means that the time-frequency shifts are carried out along a scaled version of the lattice $\L$ with density $8 \pi t$. We note that we only have a frame if $8 \pi t > 1$ and for $4 \pi t = n \in \N$, the sharp Gaussian Gabor frame bounds are given by \footnote{For $t = \tfrac{n}{8 \pi}$, $n \in \N$, the time-frequency lattice has density $n$, which means that $4 \pi t \in \N$ gives a time-frequency lattice of even density.}
\begin{equation}
	A\left((8 \pi t)^{-1/2} \L\right) = 8 \pi t \, m_\L (t) \Big|_{t = \tfrac{n}{4 \pi}} = 2 n \, m_\L \left( \tfrac{n}{4 \pi} \right)
\end{equation}
and
\begin{equation}
	B\left((8 \pi t)^{-1/2} \L\right) = 8 \pi t \, M_\L (t) \Big|_{t = \tfrac{n}{4 \pi}} = 2 n \, M_\L \left( \tfrac{n}{4 \pi} \right)
\end{equation}
More general, $M_\L \left( \tfrac{t}{8 \pi} \right) = \widetilde{B} \left( (8 \pi t)^{-1/2} \L \right)$ gives the Tolimieri and Orr bound for the Gaussian Gabor system with lattice density $8 \pi t$ and (recall Section \Ref{sec_Cond_A})
\begin{equation}
	B\left((8 \pi t)^{-1/2} \, \L\right) \leq 8 \pi t \, \widetilde{B}\left((8 \pi t)^{-1/2} \, \L\right) = 8 \pi t \, M_\L \left( \tfrac{t}{8 \pi} \right).
\end{equation}
We have equality whenever $4 \pi t \in \N$.

One curios part of the proof is now that we lose the parameter $t$ when moving from the trace of the heat kernel to the zeta function, as we integrate over $t \in \R_+$. Actually, we need the information that the minimal trace of the heat kernel stays minimal for all $t$ in order to establish our main result, which then implies the maximality result for $t=\tfrac{1}{4 \pi}$, which corresponds to the Gabor case of density 2. Lastly, we note that the (real) parameter $t$ is usually interpreted as ``time" in the case of the heat kernel, whereas the density of the lattice (which equals the reciprocal value of the surface area of the torus) is fixed. However, it can also be interpreted as the density of the lattice for fixed time or as the product of both, density and time.

\subsection{The Modular Aspect of the Proof}\label{sec_modular}
We will only give some basic details on modular functions and refer to the textbooks of Serre \cite{Serre_Course_1973} or Stein and Shakarchi \cite{SteSha_Complex_03}. A modular function $f$ of weight $k$ satisfies
\begin{equation}
	f(\tau) = (c \tau + d)^{-k} f \left(\tfrac{a \tau + b}{c \tau +d}\right), \qquad \S \in PSL(2,\Z) = SL(2,\Z)\slash\{\pm I\}.
\end{equation}
We note that $\Z^2$ is invariant under the action of the group $SL(2,\Z)$, which consists of determinant 1 matrices with integer entries. Expressed differently, we only change the basis of the lattice under the action of this group. Since, in general, the matrices $S$ and $-S$ generate the same lattice, they are identified which is why we only consider $PSL(2,\Z)$, the modular group.

Next, we note that a 2-dimensional lattice can be identified (modulo rotation) with a complex number $\tau = x + iy \in \H$. The generating matrix then has the form
\begin{equation}
	S = y^{-1/2} \left(
	\begin{array}{c c}
		1 & x\\
		0 & y
	\end{array}
	\right).
\end{equation}
We set
\begin{equation}
	a(\tau;t) = m_\L \left( \tfrac{t}{4 \pi} \right)
	\qquad \textnormal{ and } \qquad
	b(\tau;t) = M_\L \left( \tfrac{t}{4 \pi} \right),
\end{equation}	
where we identify $\tau = x + i y$ with $\L$. Then, it follows that $a(\tau;t)$ and $b(\tau;t)$ are modular forms of weight 0 with respect to $\tau$. After all, choosing a different basis for the lattice just changes the order of summation. However, the values do not depend on the order of summation as the sums defining the heat kernel are absolutely convergent. In the separable case, $\tau$ is purely imaginary, i.e., $\tau = i y$ with $y \in \R_+$, and the values of $a(i y;t)$ and $b(i y;t)$ split into products of theta functions of the form
\begin{align}
	a(iy;t) = \theta_4 (i t y^{-1}) \, \theta_4 (i t y) \quad \textnormal{ and } \quad
	b(iy;t) = \theta_3 (i t y^{-1}) \, \theta_3 (i t y).
\end{align}
In the non-separable case, the author is not aware of a possibility to write $a(\tau;t)$ and $b(\tau;t)$ as just a product of ``nulls" of Jacobi's theta functions. However, the possibility to write $a$ and $b$ as products of theta functions lastly gives the connection to the Dedekind eta function, which is a modular form of weight 1/2.

We recall from \cite{Osgood_Determinants_1988}, or by repeating the calculations in Section \ref{sec_zeta} for general $\tau \in \H$, that the determinant of the Laplace--Beltrami operator on a (general) torus $\T^2_{\L_\tau}$ is given by
\begin{equation}
	\detp \Delta_\tau = Im(\tau) \, \eta(\tau)^2 \, \overline{\eta(\tau)}{\,}^2,
\end{equation}
where $\T^2_{\L_\tau} = \C / \L_\tau$ with $\L_\tau = Im(\tau)^{-1/2}(\Z \times \tau \Z)$, $\tau \in \H$ (also compare with \cite[Chap.~VII]{Serre_Course_1973}). The behavior of the eta function under the modular group is given by
\begin{equation}
	\eta(\tau) = (c \tau + d)^{-1/2} \eta \left( \tfrac{a \tau + b}{c \tau + d} \right)
\end{equation}
and the imaginary part of $\tau$ transforms as
\begin{equation}
	Im(\tau) = |c \tau + d|^2 Im\left( \tfrac{a \tau + b}{c \tau + d} \right).
\end{equation}
Hence, we have the result
\begin{equation}
	\detp \Delta_\tau = \detp \Delta_{\tfrac{a \tau + b}{c \tau + d}} \, .
\end{equation}
It follows that the determinant of the Laplace--Beltrami operator on a two dimensional torus is a modular form of weight 0 with respect to the metric on the torus.

By using the symplectic version of Poisson's summation formula we find that the determinant also behaves like a modular form with respect to the volume of the lattice (also see Appendix \ref{sec_symplectic})
\begin{equation}
	\detp \Delta_\L = \vol(\L)^{-1} \detp \Delta_{\L^\circ} = \vol(\L)^{-1} \detp \Delta_{\vol(\L)^{-1} \L} \, .
\end{equation}
However, the volume is a positive number and the behavior of the determinant is therefore comparable to the behavior of the Dedekind eta function for purely imaginary arguments. As already explained, the parameter $t$ describes the volume of the lattice. It is for this reason that we cannot really distinguish between the modular parameter $\tau \in \H$ and the real parameter $t$ for separable tori which causes one of the curiosities mentioned at the beginning of this section.

\begin{appendix}
\section{The Symplectic Poisson Summation Formula}\label{sec_symplectic}
When working in the time-frequency plane or phase space, it is often advantageous to exploit the symplectic structure of $\R^{2d}$. The concept of phase space allows for a simultaneous treatise of two aspects of a function, coupled via the Fourier transform, such as position and momentum or, in the case of Gabor analysis, time and frequency. For more details on phase space methods and symplectic geometry we refer to the textbooks of Folland \cite{Fol89} and de Gosson \cite{Gos06, Gos11}.

For two points $z = (x,\omega)$ and $z' = (x',\omega')$ in phase space, the symplectic form is given by
\begin{equation}
	\sigma(z,z') = x \cdot \omega' - \omega \cdot x' = z \cdot J z', \quad
	J = 
	\left(
		\begin{array}{r l}
			0 & I\\
			-I & 0
		\end{array}
	\right)	
\end{equation}
where $\cdot$ denotes the standard Euclidean inner product of two vectors in $\R^d$. A matrix $S$ is symplectic if and only if
\begin{equation}
	\sigma( S z, S z') = \sigma(z,z'), \quad \textnormal{ or equivalently, } \quad S^T J S = J.
\end{equation}
We note that $\sigma(z,z') = - \sigma(z',z)$ and, hence, $\sigma(z,z) = 0$ for all $z$. The group of all symplectic matrices is denoted by $Sp(d)$ and is a proper subgroup of $SL(2d, \R)$, except for the case $d = 1$, where we have $Sp(1) = SL(2,\R)$.

The classical version of Poisson's summation formula for lattices in $\Rd$ is 
\begin{equation}
	\sum_{\l \in \L} f(\l+z) = \vol(\L)^{-1} \sum_{\l^\bot \in \L^\bot} \F f(\l^\bot) e^{2 \pi i \l^\bot \cdot z}, \qquad z \in \Rd.
\end{equation}
The dual of the lattice $\L = M \Z^d$ is given by $\L^\bot = M^{-T} \Z^d$, $M \in GL(d,\R)$. The details when this formula holds pointwise have been worked out in \cite{Gro_Poisson_1996}.

However, we want to work in phase space which is always even dimensional. For a lattice $\L = M \Z^{2d}$ we can replace the dual lattice by the adjoint lattice $\L^\circ = J M^{-T} \Z^{2d}$, the Fourier transform by the symplectic Fourier transform and the Euclidean inner product in the exponent is replaced by the symplectic form. The symplectic version of Poisson's summation formula is given by (see also \cite{Faulhuber_Note_2018} as well as \cite{FeiLue06})
\begin{equation}
	\sum_{\l \in \L} f(\l+z) = \vol(\L)^{-1} \sum_{\l^\circ \in \L^\circ} \F_\sigma f(\l^\circ) \, e^{2 \pi i \sigma(\l^\circ, z)}, \qquad z \in \R^{2d},
\end{equation}
where $\F_\sigma$ is the symplectic Fourier transform of a function of $2d$ variables, given by
\begin{equation}
	\F_\sigma f(z) = \int_{\R^{2d}} f(z') e^{-2 \pi i \sigma(z,z')} \, dz'.
\end{equation}
We call a lattice symplectic if its generating matrix is a multiple of a symplectic matrix. From the definition of a symplectic matrix, it follows that in this case $\L^\circ = \vol(\L)^{-1/d} \L$, as by definition $J S^{-T} J^{-1} \Z^{2d} = S \Z^{2d}$ for $S \in Sp(d)$ and $J^{-1} \Z^{2d} = \Z^{2d}$. We note that any 2-dimensional lattice is symplectic.

Also, using the symplectic Fourier transform has the advantage that all $2$-dimensional, normalized Gaussians are eigenfunctions of the symplectic Fourier transform $\F_\sigma$ with eigenvalue 1 \cite{Faulhuber_Note_2018}. Hence, for 2-dimensional Gaussians and 2-dimensional lattices all formulas become particularly nice.

\end{appendix}


\end{document}